\newtheorem{theorem}{Theorem}[section]
\newtheorem{corollary}[theorem]{Corollary}
\newtheorem{lemma}[theorem]{Lemma}
\theoremstyle{definition}
\title[Weak unique continuation property]
{Approximation of Elliptic Equations with Interior Single-Point Degeneracy and Its Application to Weak Unique Continuation Property
	} 
\author[Weijia Wu, Yaozhong Hu,   Donghui Yang and Jie Zhong]{}
\subjclass{Primary: 35J70; Secondary: 35B60.}
\keywords{Degenerate elliptic equations, Weak unique continuation property.}
\thanks{The second author is supported by the Natural Sciences and Engineering Research Council of Canada (NSERC RGPIN 2024-05941), and a centennial fund of the University of Alberta; The third author is supported by the National Natural Science Foundation of China.}
\thanks{$^*$Corresponding author: Jie Zhong}
\begin{document}
\maketitle

\centerline{\scshape
Weijia Wu,$^{{\href{mailto:weijiawu@yeah.net}{\textrm{\Letter}}}1}$
Yaozhong Hu,$^{{\href{mailto:yaozhong@ualberta.ca}{\textrm{\Letter}}}2}$
Donghui Yang,$^{{\href{mailto:donghyang@outlook.com}{\textrm{\Letter}}}}$
and Jie Zhong$^{{\href{mailto:jiezhongmath@gmail.com}{\textrm{\Letter}}}*1}$}

\medskip

{\footnotesize
 \centerline{$^1$School of Mathematics and Physics, North China Electric Power University, Beijing, 102206, China}
} 

\medskip

{\footnotesize
 \centerline{$^2$Department of Mathematical and Statistical Sciences, University of Alberta, Edmonton, AB T6G 2G1, Canada}
}

\medskip

{\footnotesize
	\centerline{$^3$School of Mathematics and Statistics, Central South University, Changsha, 410083, China}
}

\medskip

{\footnotesize
	\centerline{$^4$Department of Mathematics, California State University Los Angeles, Los Angeles, 90032, USA}
}

\bigskip

 \centerline{(Communicated by Handling Editor)}


\begin{abstract}
	This paper investigates the  weak unique continuation property (WUCP) for a class of high-dimensional elliptic equations with interior point degeneracy. First, we establish well-posedness results in weighted function spaces. Then, using an innovative approximation method, we derive the three sphere inequality at the degenerate point. Finally, we apply the three sphere inequality to prove WUCP for two different cases.
\end{abstract}


	\section {Introduction}

\label{sec:int}
\indent
The unique continuation properties for uniformly elliptic equations have been extensively studied in the literature (\cite{Adolfsson,Bakri,Banerjee,Carleman,Cavalheiro,Cavalheiro1,Fabes,Garofalo,Garofalo1987,Garofalo1,Hormander,Kenig,Kukavica1,Kukavica,Phung,Plis}). There are two types of unique continuation properties: the strong unique continuation property (SUCP) and the weak unique continuation property (WUCP). Below, we briefly recall these two properties.

Let $P(x,\partial)$ be a uniformly elliptic operator. The strong unique continuation property (SUCP) states that if $P(x,\partial)u = 0$ in a domain $\Omega \subset \mathbb{R}^N$, and there exists a point $x_0 \in \Omega$ such that $u$ vanishes to infinite order, meaning  
\begin{equation*}
	\int_{B_r(x_0)} u^2  \mathrm d x = O(r^k) \text{ as } r \to 0, \mbox{ for every } k \in \mathbb{N},
\end{equation*}		
where $B_r(x_0)$ denotes a ball in $\Omega$ centered at $x_0$ with radius $r$, then $u \equiv 0$ in $\Omega$.

The weak unique continuation property (WUCP) states  if $P(x,\partial)u = 0$ in $\Omega$, and $u = 0$ in an open subset $\omega \subset \Omega$, then $u \equiv 0$ in $\Omega$. It is easy to see that the WUCP requires less stringent conditions compared to the SUCP.

Notably, unique continuation does not hold universally for all uniformly elliptic equations (\cite{Plis}). Furthermore, the analysis of unique continuation properties becomes significantly more challenging for degenerate elliptic equations compared to uniformly elliptic ones. Currentlty, there are two methods — the three sphere inequality (\cite{Adolfsson, Garofalo, Kukavica1, Kukavica, Kukavica2,Phung,Vessella}) and Carleman estimates (\cite{Bakri, Banerjee, Carleman,Hormander, Kenig,  Plis}), which are effective in dealing with certain special cases (\cite{Banerjee, Garofalo3,Garofalo4, Garofalo1,  Wu1}).

The three sphere inequality states that for a harmonic (or subharmonic) function $u(x)$ defined in a region containing three concentric balls $B_{r_1}$, $B_{r_2}$ and $B_{r_3}$ with $r_1 < r_2 < r_3$, the maximum value $H(r)$ of $u(x)$  on the intermediate sphere $B_{r_2}$ can be bounded by a weighted geometric mean of the maximum values on the inner and outer spheres:
\begin{equation*}
	H(r_2)\leq (H(r_1))^\mu(H(r_3))^{1-\mu}, 
\end{equation*}
where $\mu\in(0,1)$ is determined by the radii. The three sphere inequality is developed on the basis of the doubling inequality, which was originally introduced by Garofalo and Lin in \cite{Garofalo}. 
The authors in \cite{Phung} provides a detailed introduction to the doubling inequality, the three sphere inequality, and their applications in unique continuation. 
In \cite{Garofalo4}, the authors primarily investigates the unique continuation properties of a specific class of second-order elliptic operators that degenerate on manifolds of arbitrary codimension, using the doubling inequality. The focus is on the model operator
\begin{equation*}
	P_\alpha = \Delta_z + |z|^{2\alpha} \Delta_t, \ \alpha>0,
\end{equation*}
in $\mathbb{R}^n \times \mathbb{R}^m$, which is elliptic outside a degeneracy manifold $(\left\lbrace 0\right\rbrace \times \mathbb{R}^m)$ but degenerates on it.
The authors establishes SUCP using Carleman estimates and introduces a quantitative version of SUCP that bypasses Carleman estimates, instead relying on the doubling inequality. Similarly, in \cite{tao2008weighted}, the doubling inequality is also applied to study the unique continuation properties of solutions to degenerate Schr\"odinger equations influenced by singular potentials and weighted settings. 
In \cite{banerjee2020strong}, SUCP is established for a class of degenerate elliptic operators with Hardy-type potentials using Carleman estimates. This work extends the results of \cite{Garofalo4} but does not yield a quantitative conclusion. Notably, the three sphere inequality appears to be more effective for studying quantitative weak unique continuation properties.

In this paper, we shall consider the   weak unique continuation properties for the elliptic equation with degenerate interior point by approximation. 
It is well known that the solution spaces of degenerate elliptic equations belong to weighted Sobolev spaces (\cite{Cavalheiro, Cavalheiro1, Fabes, Franchi, stuart2018stability}). A natural approach is to approximate a solution of a degenerate elliptic equation by a sequence of solutions to uniformly elliptic equations  (\cite{Cavalheiro, Cavalheiro1}). This method is feasible in weighted spaces and applies to high-dimensional cases, but it heavily relies on the Calder\'on-Zygmund decomposition, which can compromise certain desirable properties of the weight function. For instance, the approximating weight functions may lack differentiability, which is crucial when using the three sphere inequality to prove WUCP, requiring the approximating weight functions to be at least Lipschitz continuous. 
Another approximation approach, similar to that in \cite{MR2106129,cannarsa2016,wu2020carleman}, involves constructing a non-degenerate coefficient $|x+\epsilon|^\alpha$ over the entire domain $\Omega$ to approximate the degenerate coefficient $|x|^\alpha$. However, this method is suitable for one-dimensional degenerate equations but not for the high-dimensional problems we aim to study. For the problem we consider in this paper, local estimates are required to approximate the solution (see Lemma \ref{08.15.L5}).

While the idea of approximation has been utilized in many works, our method is fundamentally different from those in the existing literature.
First, one of our main contributions is the introduction of an alternative approximation method for a specific class of weight functions with a single degenerate interior point. Our approximation is achieved by constructing a carefully designed non-degenerate weight function to approximate the degenerate weight function within a small local region $B_\epsilon$ rather than the entire domain $\Omega$. This ensures that the weight function remains differentiable in high-dimensional settings. For a detailed discussion, refer to Section \ref{S3}.
Second, in the proof of WUCP, we consider two cases: 
$0\in \omega$ and $0\notin \omega$. For case $0\in \omega$, 
the result is obtained using the three sphere inequality at both degenerate and non-degenerate points. For the more challenging case $0\notin \omega$, we apply Schauder estimates to address the difficulties arising from the degenerate point being excluded from $\omega$. 		 
Finally, we derive a WUCP result. 

It is worth noting that in most works (see \cite{MR2106129,cannarsa2016,wu2020carleman}), the SUCP is typically achieved using the doubling inequality. However, this paper employs the more robust three sphere inequality. Although we do not present results on SUCP here, we have demonstrated it in another working paper using an annular estimate method. 

We organize the paper as follows: In Section \ref{S2}, we present several well-posedness results. In Section \ref{S3}, we provide a detailed explanation of the construction of the approximation and introduce the preliminary lemmas required for proving the three sphere inequality at the degenerate point. In Section \ref{S4}, we establish the three sphere inequality at the degenerate point and prove WUCP for two cases:$0\in \omega$ and $0\notin \omega$.

\section{Preliminary results}\label{S2}
Let us consider the following equation
\begin{equation}\label{08.15.1}
	\begin{cases}
		-\mathrm{div}(w \nabla u)=f, &\mbox{in } \Omega, \\
		u=0, &\mbox{on } \partial\Omega, 
	\end{cases}
\end{equation}
where $\Omega \subset \mathbb{R}^N$ ($N\ge 2$) is a domain containing the origin ($0 \in \Omega$), and its boundary $\partial \Omega$ is of class $C^2$. The weight function is given by $w = |x|^\alpha$, with a fixed $\alpha \in (0,2)$, and $f$ is a given function such that  $f \in L^2(\Omega; w^{-1})$. The weighted Sobolev space  $L^2(\Omega; w)$  defined for every $w>0$  almost everywhere as:
\begin{equation*}
	L^2(\Omega; w)=\left\lbrace u(x) \mid	u \mbox{ is measurable, and } \int_\Omega u^2w\mathrm d x<\infty \right\rbrace .
\end{equation*}
The inner product on $L^2(\Omega;w)$ is 
\begin{equation*}
	(u,v)_{L^2(\Omega;w)}=\int_\Omega uvw\mathrm d x, 
\end{equation*}
and the norm on $L^2(\Omega; w)$ is 
\begin{equation*}
	\|u\|_{L^2(\Omega;w)}=\left(\int_\Omega u^2w\mathrm d x\right)^ \frac{1}{2}. 
\end{equation*}
It is well known that $(L^2(\Omega; w),(\cdot,\cdot)_{L^2(\Omega;w)})$  (see \cite{GC}) is a Hilbert space and $(L^2(\Omega; w), \|\cdot\|_{L^2(\Omega;w)})$ is a Banach space.



Set 
\begin{equation*}
	H_w^{1}(\Omega)=\left\{u\in L^2(\Omega)\colon  \frac{\partial u}{\partial x_i}\in L^2(\Omega;w), i=1,\cdots, N\right\}, 
\end{equation*}
where $ \frac{\partial u}{\partial x_i}, i=1,\cdots, N$ are the distribution partial derivatives, the inner product on $H_w^1(\Omega)$ is 
\begin{equation*}
	\begin{split}
	(u, v)_{H_w^1(\Omega)}&=\int_\Omega uvw\mathrm d x+\sum_{i=1}^N \int_\Omega \frac{\partial u}{\partial x_i} \frac{\partial v}{\partial x_i}w\mathrm d x\\
	 &= (u,v)_{L^2(\Omega;w)} + (\nabla u , \nabla v)_{L^2(\Omega;w)}
	\end{split}
\end{equation*}
and the norm is 
\begin{equation*}
	\|u\|_{H_w^1(\Omega)}=\left(\int_\Omega u^2w\mathrm d x+\sum_{i=1}^N\int_\Omega \left| \frac{\partial u}{\partial x_i}\right|^2w\mathrm d x\right)^ \frac{1}{2}. 
\end{equation*}
Define
\begin{equation*}
	H_{w,0}^1(\Omega)=\overline{\mathcal{D}(\Omega)}^{\|\cdot\|_{H_{w}^1(\Omega)}},
\end{equation*}
where $\mathcal{D}(\Omega) = C_0^\infty(\Omega)$ is the space of test functions.
Denote by $H_w^{-1}(\Omega)$ the dual space of $H_{w,0}^1(\Omega)$. This space is a subspace of $\mathcal{D}'(\Omega)$, the space of distributions on $\Omega$. It is well known that $(H_w^1(\Omega), (\cdot, \cdot)_{H_w^1(\Omega)})$ forms a Hilbert space, while $(H_w^1(\Omega), \|\cdot\|_{H_w^1(\Omega)})$ is a Banach space.

Next, we aim to establish some well-posedness results for equation \eqref{08.15.1}. 
First, we introduce some notations that will be used:
\begin{equation*}
	\Omega^\epsilon= \left\lbrace x\in \Omega \mid |x|>\epsilon\right\rbrace, \ B_\epsilon= \left\lbrace x\in \Omega \mid |x|<\epsilon\right\rbrace.
\end{equation*}
Similar to the proof of Lemma 3.1 in \cite{Wu} or Proposition 2.1 (1) in \cite{stuart2018stability}, we can easily derive the following weighted Hardy inequality.

\begin{lemma}\label{08.15.L1}
	For any $N \geq 2$ and $\alpha \in (0,2)$, if $u \in H_{w,0}^1(\Omega)$, then we have
	\begin{equation}\label{hardy61}
		(N-2+\alpha) \left( \int_{\Omega} |x|^{\alpha-2} u^2 \, \mathrm dx \right) ^{\frac{1}{2}} \leq 2 \left( \int_{\Omega} |x|^{\alpha} \nabla u \cdot \nabla u \, \mathrm dx \right) ^{\frac{1}{2}}.
	\end{equation}
	Moreover, let $m := \sup_{x \in \Omega} |x| + 1$. We can also derive  
	\begin{equation}\label{hardy62}
		\left( \int_{\Omega} u^2 \, \mathrm dx \right) ^{\frac{1}{2}}\leq \frac{2m^{1-\frac{\alpha}{2}}}{N-2+\alpha}\left( \int_{\Omega} |x|^{\alpha} \nabla u \cdot \nabla u \, \mathrm dx \right) ^{\frac{1}{2}}.
	\end{equation}						
\end{lemma}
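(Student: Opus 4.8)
The plan is to prove \eqref{hardy61} first for $u \in \mathcal{D}(\Omega)$ and then extend to all of $H_{w,0}^1(\Omega)$ by density, which is natural since $H_{w,0}^1(\Omega)$ is by definition the $\|\cdot\|_{H_w^1(\Omega)}$-closure of $\mathcal{D}(\Omega)$. For a test function $u$ I would introduce the vector field $V(x) = |x|^{\alpha-2}x$ and compute directly, away from the origin, that
\[
\Div V = (N + \alpha - 2)\,|x|^{\alpha-2}.
\]
Multiplying the scalar $u^2$ against this identity is the mechanism that converts the weighted $L^2$-norm of $u$ into a term involving $\nabla u$.

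The core of the argument is an integration by parts. Because $V$ is singular at the origin when $\alpha < 1$, I would not integrate over all of $\Omega$ at once, but rather over $\Omega^\epsilon = \{x \in \Omega : |x| > \epsilon\}$, where $V$ is smooth, and then let $\epsilon \to 0$. On $\Omega^\epsilon$ one has
\[
(N + \alpha - 2)\int_{\Omega^\epsilon} |x|^{\alpha-2} u^2 \, \mathrm d x = \int_{\partial\Omega^\epsilon} u^2\, (V\cdot\nu) \, \mathrm d S - 2\int_{\Omega^\epsilon} u\,|x|^{\alpha-2}(x\cdot\nabla u)\, \mathrm d x .
\]
The boundary $\partial\Omega^\epsilon$ splits into $\partial\Omega$, where the surface term vanishes because $u$ has compact support in $\Omega$, and the inner sphere $\{|x|=\epsilon\}$. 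On the inner sphere the outward normal points toward the origin, so $V\cdot\nu = -\epsilon^{\alpha-1}$, and since the surface measure is of order $\epsilon^{N-1}$ while $u^2$ is bounded, this contribution is $O(\epsilon^{N+\alpha-2})$; as $N \ge 2$ and $\alpha > 0$ give $N+\alpha-2 > 0$, it tends to $0$. The restriction $\alpha\in(0,2)$ also yields $\alpha-2 > -N$, so $|x|^{\alpha-2}$ is locally integrable and the limiting identity
\[
(N + \alpha - 2)\int_{\Omega} |x|^{\alpha-2} u^2 \, \mathrm d x = - 2\int_{\Omega} u\,|x|^{\alpha-2}(x\cdot\nabla u)\, \mathrm d x
\]
holds by dominated convergence.

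From here \eqref{hardy61} follows by Cauchy--Schwarz: estimating $|x\cdot\nabla u| \le |x|\,|\nabla u|$ and factoring the integrand as $\bigl(|x|^{\frac{\alpha}{2}-1}|u|\bigr)\bigl(|x|^{\frac{\alpha}{2}}|\nabla u|\bigr)$ gives
\[
(N+\alpha-2)\int_\Omega |x|^{\alpha-2} u^2\,\mathrm d x \le 2\left(\int_\Omega |x|^{\alpha-2}u^2\,\mathrm d x\right)^{\frac12}\left(\int_\Omega |x|^\alpha |\nabla u|^2\,\mathrm d x\right)^{\frac12},
\]
and dividing by $\bigl(\int_\Omega |x|^{\alpha-2}u^2\,\mathrm d x\bigr)^{1/2}$ (the vanishing case being trivial) yields the claim for test functions. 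For \eqref{hardy62} I would observe that $|x| \le m$ on $\Omega$ and $2-\alpha > 0$, so $|x|^{2-\alpha} \le m^{2-\alpha}$ and hence $\int_\Omega u^2\,\mathrm d x \le m^{2-\alpha}\int_\Omega |x|^{\alpha-2}u^2\,\mathrm d x$; combining with \eqref{hardy61} produces the stated constant $\tfrac{2m^{1-\alpha/2}}{N-2+\alpha}$. Finally, to pass to an arbitrary $u \in H_{w,0}^1(\Omega)$, I would take $u_n \in \mathcal{D}(\Omega)$ with $u_n \to u$ in $H_w^1(\Omega)$: the right-hand side converges since $\nabla u_n \to \nabla u$ in $L^2(\Omega;w)$, and after passing to an a.e.-convergent subsequence the left-hand side is controlled from below by Fatou's lemma, so both inequalities survive the limit.

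The step I expect to be the main obstacle is the justification of the integration by parts at the degenerate point $0$ — concretely, confirming that the inner boundary term on $\{|x|=\epsilon\}$ vanishes in the limit and that the limiting integrals are finite. This is precisely where the hypothesis $\alpha\in(0,2)$ enters, guaranteeing both $N+\alpha-2 > 0$ (so the boundary term decays) and the local integrability of $|x|^{\alpha-2}$; it is the only place where a genuine limiting argument, rather than routine manipulation, is needed.
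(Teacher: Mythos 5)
Your proposal is correct and follows essentially the same route as the paper: both integrate the identity $\operatorname{div}(|x|^{\alpha-2}x)=(N-2+\alpha)|x|^{\alpha-2}$ against $u^2$ over $\Omega^\epsilon$, handle the inner sphere $\{|x|=\epsilon\}$, apply Cauchy--Schwarz, pass to the limit via Fatou, and deduce \eqref{hardy62} from the bound $|x|^{2-\alpha}\leq m^{2-\alpha}$. The only cosmetic difference is that the paper works directly with $u\in H_{w,0}^1(\Omega)$ and simply discards the inner boundary term because it has a favorable sign, whereas you first establish the identity for test functions (where that term vanishes as $O(\epsilon^{N+\alpha-2})$) and then conclude by density.
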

\begin{proof}
	Let $u \in H_{w,0}^1(\Omega)$. Then $u = 0$ on $\partial \Omega$ in the sense of traces. Moreover, for each $\epsilon> 0$, the restriction of $u$ to the open set $\Omega^\epsilon$ belongs to $W^{1,2}(\Omega^\epsilon)$, and the trace of $u$ on $\partial \Omega^\epsilon$ is well-defined and bounded in $L^2(\partial \Omega^\epsilon)$.
	Applying the divergence theorem on $\Omega^\epsilon$, we obtain 
	\begin{equation*}
		\begin{aligned}
			2 \int_{\Omega^\epsilon} |x|^{\alpha-2} (x \cdot \nabla u) u \, \mathrm dx &= \int_{\Omega^\epsilon} |x|^{\alpha-2} x \cdot \nabla (u^2) \, \mathrm dx \\
			&= \int_{\partial \Omega} |x|^{\alpha-2} u^2 (x \cdot \nu) \, \mathrm ds + \int_{\partial B_\epsilon} |x|^{\alpha-2} u^2 (x \cdot \nu) \, \mathrm ds \\
			&\quad - \int_{\Omega^\epsilon} \operatorname{div}\left( |x|^{\alpha-2} x \right) u^2 \, \mathrm dx.
		\end{aligned}
	\end{equation*} 
	
	Since u = 0 on $\partial \Omega$, the boundary integral over $\partial \Omega$ vanishes. Noting that $x \cdot \nu = -|x|$ on $\partial B_\epsilon$ and
	$\operatorname{div}\left( |x|^{\alpha-2} x \right) = (N-2+\alpha) |x|^{\alpha-2}$,
	we deduce
	\begin{equation*}
		\begin{aligned}
			2 \int_{\Omega^\epsilon} |x|^{\alpha-2} (x \cdot \nabla u) u \, \mathrm dx
			= -\int_{\partial B_\epsilon} |x|^{\alpha-1} u^2 \, \mathrm ds - (N-2+\alpha) \int_{\Omega^\epsilon} |x|^{\alpha-2} u^2 \, \mathrm dx.
		\end{aligned}
	\end{equation*}
	Rearranging terms gives
	\begin{equation*}
		(N-2+\alpha) \int_{\Omega^\epsilon} |x|^{\alpha-2} u^2 \, \mathrm dx \leq -2 \int_{\Omega^\epsilon} |x|^{\alpha-2} (x \cdot \nabla u) u \, \mathrm dx.
	\end{equation*}
	Applying the Cauchy–Schwarz inequality, we estimate
	\begin{equation*}
		\begin{aligned}
			&\hspace{4.5mm}-2 \int_{\Omega^\epsilon} |x|^{\alpha-2} (x \cdot \nabla u) u \, \mathrm dx \leq 2 \int_{\Omega^\epsilon} |x|^{\frac{\alpha}{2}-1} |u| \, |x|^{\frac{\alpha}{2}} |\nabla u| \, \mathrm dx \\
			&\leq 2 \left( \int_{\Omega^\epsilon} |x|^{\alpha-2} u^2 \, \mathrm dx \right)^{1/2} \left( \int_{\Omega^\epsilon} |x|^{\alpha} |\nabla u|^2 \, \mathrm dx \right)^{1/2}.
		\end{aligned}
	\end{equation*}
	Thus, we obtain
	\begin{equation}\label{hardy_eps}
		(N-2+\alpha) \left( \int_{\Omega^\epsilon} |x|^{\alpha-2} u^2 \, \mathrm dx \right)^{1/2} \leq 2 \left( \int_{\Omega^\epsilon} |x|^{\alpha} |\nabla u|^2 \, \mathrm dx \right)^{1/2}.
	\end{equation}
	
	Passing to the limit as $\epsilon\to 0^+$, we use the almost everywhere convergence of $|x|^{\alpha-2} u^2 \chi_{\Omega^\epsilon} \to |x|^{\alpha-2} u^2$, and Fatou’s lemma yields
	\begin{equation*}
		\int_{\Omega} |x|^{\alpha-2} u^2 \, \mathrm dx \leq \liminf_{\epsilon\to 0} \int_{\Omega^\epsilon} |x|^{\alpha-2} u^2 \, \mathrm dx.
	\end{equation*}
	Since $|x|^\alpha |\nabla u|^2 \in L^1(\Omega)$,  and taking limits in \eqref{hardy_eps} yields
	\begin{equation*}
		(N-2+\alpha) \left( \int_{\Omega} |x|^{\alpha-2} u^2 \, \mathrm dx \right)^{1/2} \leq 2 \left( \int_{\Omega} |x|^{\alpha} |\nabla u|^2 \, \mathrm dx \right)^{1/2},
	\end{equation*}
	thus establishing inequality \eqref{hardy61}.
	
	Finally, to prove inequality \eqref{hardy62}, observe that
	\begin{equation*}
		|u(x)|^2 \leq m^{2-\alpha} |x|^{\alpha-2} |u(x)|^2, \quad \forall \, x \in \Omega,
	\end{equation*}
	where $m := \sup_{x\in \Omega} |x| + 1$. Integrating over $\Omega$ and applying \eqref{hardy61} gives
	\begin{equation*}
		\left( \int_{\Omega} u^2 \, \mathrm dx \right)^{1/2} \leq \frac{2m^{1-\alpha/2}}{N-2+\alpha} \left( \int_{\Omega} |x|^{\alpha} |\nabla u|^2 \, \mathrm dx \right)^{1/2}.
	\end{equation*}
	The proof is thus complete.
\end{proof}		

From Lemma \ref{08.15.L1}, it is  also evident that space $H_{w,0}^1(\Omega)$ is embedded into space 
$L^2(\Omega)$. Next, we will prove that this embedding is compact.		
\begin{lemma}\label{08.15.L4}
	The embedding $H_{w,0}^1(\Omega)\subset \subset  L^2(\Omega)$ is compact. 
\end{lemma}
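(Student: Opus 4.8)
The plan is to prove sequential compactness directly: given a bounded sequence $\{u_n\}$ in $H_{w,0}^1(\Omega)$, say $\|u_n\|_{H_w^1(\Omega)} \le M$ for all $n$, I would extract a subsequence that is Cauchy in $L^2(\Omega)$. The guiding idea is to separate the behaviour near the degenerate point $0$, where the weight $w = |x|^\alpha$ vanishes and the classical Rellich--Kondrachov theorem is unavailable, from the behaviour on $\Omega^\epsilon = \{|x| > \epsilon\}$, where $w$ is bounded below by $\epsilon^\alpha > 0$ and the embedding is the standard one.

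For the region near the origin I would use the weighted Hardy inequality \eqref{hardy61} of Lemma \ref{08.15.L1} to produce a \emph{uniform tail estimate}. Since $2 - \alpha > 0$, on $B_\epsilon$ we have $|x|^{2-\alpha} \le \epsilon^{2-\alpha}$, hence
\begin{equation*}
	\int_{B_\epsilon} u_n^2 \, \mathrm dx = \int_{B_\epsilon} |x|^{2-\alpha} |x|^{\alpha-2} u_n^2 \, \mathrm dx \le \epsilon^{2-\alpha} \int_{\Omega} |x|^{\alpha-2} u_n^2 \, \mathrm dx \le \frac{4\,\epsilon^{2-\alpha}}{(N-2+\alpha)^2} \int_{\Omega} |x|^{\alpha} |\nabla u_n|^2 \, \mathrm dx.
\end{equation*}
The right-hand side is bounded by $C\epsilon^{2-\alpha} M^2$ independently of $n$ (note $N-2+\alpha \ge \alpha > 0$ for all $N \ge 2$), so the $L^2$-mass of the whole sequence on $B_\epsilon$ tends to $0$ as $\epsilon \to 0^+$, uniformly in $n$.

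On the complement $\Omega^\epsilon$ the weight satisfies $|x|^\alpha \ge \epsilon^\alpha$, so $\int_{\Omega^\epsilon} |\nabla u_n|^2 \, \mathrm dx \le \epsilon^{-\alpha} \int_\Omega |x|^\alpha |\nabla u_n|^2 \, \mathrm dx \le \epsilon^{-\alpha} M^2$; together with the bound on $\int_\Omega u_n^2 \,\mathrm dx$ this shows $\{u_n\}$ is bounded in the (unweighted) Sobolev space $H^1(\Omega^\epsilon)$. By the Rellich--Kondrachov theorem, the embedding $H^1(\Omega^\epsilon) \hookrightarrow L^2(\Omega^\epsilon)$ is compact, so a subsequence converges in $L^2(\Omega^\epsilon)$. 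Choosing a sequence $\epsilon_k = 1/k \to 0$ and applying a standard diagonal extraction, I would obtain a single subsequence (still denoted $\{u_n\}$) that converges in $L^2(\Omega^{\epsilon_k})$ for every $k$.

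It remains to assemble these two pieces into Cauchyness in $L^2(\Omega)$. Given $\delta > 0$, first choose $k$ so large that the uniform tail estimate gives $\int_{B_{\epsilon_k}} u_n^2 \, \mathrm dx < \delta$ for every $n$; then, since $(u_n - u_m)^2 \le 2u_n^2 + 2u_m^2$, the contribution of $B_{\epsilon_k}$ to $\|u_n - u_m\|_{L^2(\Omega)}^2$ is controlled, while the contribution of $\Omega^{\epsilon_k}$ is made small for $n,m$ large by the $L^2(\Omega^{\epsilon_k})$ convergence. The main obstacle is precisely the coupling at the degenerate point: the classical compactness theorem cannot see the origin because the weight degenerates there, and the only tool that converts the gradient bound into $L^2$-control of the mass near $0$ is the Hardy inequality. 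Verifying that the degeneracy exponent $\alpha \in (0,2)$ makes the tail factor $\epsilon^{2-\alpha}$ genuinely vanish --- and that this vanishing is uniform over the bounded set --- is the crux; the Rellich step and the diagonal argument are then routine.
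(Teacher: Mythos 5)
Your proof is correct, and it follows the same basic decomposition as the paper's: split $\Omega$ into the small ball $B_\epsilon$ around the degenerate point and its complement $\Omega^\epsilon$, apply the classical Rellich--Kondrachov theorem on $\Omega^\epsilon$ (where the equation is uniformly elliptic), and show that the $L^2$-mass on $B_\epsilon$ is uniformly small. Where you genuinely diverge is in the mechanism for that last, crucial step. The paper invokes a weighted Sobolev embedding $\|u\|_{L^q(\Omega)} \le C\|u\|_{H_{w,0}^1(\Omega)}$ for some $q>2$ (quoted from an external reference, valid because $\alpha<2$ makes $\tfrac{2N}{N-2+\alpha}>2$) and then applies H\"older's inequality to get $\int_{B_\epsilon}|u_n|^2\,\mathrm dx \le C|B_\epsilon|^{(q-2)/q}$. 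You instead extract the tail bound $\int_{B_\epsilon}u_n^2\,\mathrm dx \le C\epsilon^{2-\alpha}M^2$ directly from the Hardy inequality \eqref{hardy61} already proved in Lemma \ref{08.15.L1}, by inserting the factor $|x|^{2-\alpha}|x|^{\alpha-2}$. Your route is more self-contained (it needs nothing beyond what the paper has already established) and makes the role of the hypothesis $\alpha\in(0,2)$ completely transparent, since the tail factor is exactly $\epsilon^{2-\alpha}$. The paper's route, by contrast, packages the argument as ``weakly null implies strongly null,'' which avoids your diagonal extraction but requires a contradiction argument to upgrade weak convergence in $L^2(\Omega)$ to weak convergence in $W^{1,2}(\Omega^\epsilon)$; the two formulations of compactness are of course equivalent for Hilbert spaces. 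The only point you should make explicit is that $\Omega^{\epsilon}$ is a Lipschitz (indeed smooth-boundary) domain for $\epsilon$ small, so that Rellich--Kondrachov applies there; this is implicit in the paper as well.
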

\begin{proof}
	To establish the compactness of the embedding it suffices to show that if $\left\{u_n\right\}$ is a sequence converging weakly to zero in $H_{w,0}^1(\Omega)$ as $n \rightarrow \infty$, then $\left\|u_n\right\|_{L^2(\Omega)} \rightarrow 0$ as $n \rightarrow \infty$. 
	
	Since $H_{w,0}^1(\Omega)$ is continuously embedded in $L^2(\Omega)$ by Lemma \ref{08.15.L1}, $L^2(\Omega)^* \subset$ $H_{w,0}^1(\Omega)^*$ and hence $\left\{u_n\right\}$ converges weakly to zero in $L^2(\Omega)$. 
	
	Consider $\epsilon>0$. If $\left\{u_n\right\}$ does not converge weakly to zero in $W^{1,2}\left(\Omega^{\epsilon}\right)$, there exist $f \in W^{1,2}\left(\Omega^{\epsilon}\right)^*$, a subsequence $\left\{u_{n_k}\right\}$ and $\delta>0$ such that $\left|f\left(u_{n_k}\right)\right| \geq \delta$ for all $n_k$. Passing to a further subsequence if necessary, we can suppose that $\left\{u_{n_k}\right\}$ converges weakly to an element $v$ in $W^{1,2}\left(\Omega^{\epsilon}\right)$. Thus $\left\{u_{n_k}\right\}$ converges weakly to $v$ in $L^2\left(\Omega^{\epsilon}\right)$ and so $v=0$ a.e. on $\Omega^{\epsilon}$ since $\left\{u_n\right\}$ converges weakly to zero in $L^2(\Omega)$ and hence also on $L^2\left(\Omega^{\epsilon}\right)$. But then $f\left(u_{n_k}\right) \rightarrow f(v)=f(0)=0$ as $n_k \rightarrow \infty$, contradicting with $\left|f\left(u_{n_k}\right)\right| \geq \delta$ for all $n_k$. Hence $\left\{u_n\right\}$ converges weakly to zero in $W^{1,2}\left(\Omega^{\epsilon}\right)$ and therefore $\left\|u_n\right\|_{L^2\left(\Omega^{\epsilon}\right)} \rightarrow 0$ as $n \rightarrow \infty$ . From the above it follows that
	\begin{equation}\label{UN}
		\limsup _{n \rightarrow \infty}\left\|u_n\right\|_{L^2(\Omega)}^2=\limsup _{n \rightarrow \infty} \int_{B_\epsilon}\left|u_n\right|^2 d x.
	\end{equation}
	But from \cite{MR1794994} and \eqref{hardy62} in Lemma \ref{08.15.L1}, we have
	\begin{equation}\label{LP}
		\left\|u \right\|_{L^q(\Omega)} \le C \left\| u\right\|_{H_{w,0}^1(\Omega)}, \ 1\le q \le \frac{2N}{N-2+\alpha},
	\end{equation}
	then (taking $q>2$)
	\begin{equation*}
		\int_{B_\epsilon}\left|u_n\right|^2 d x \le \left(\int_{B_\epsilon} |1|^ {\frac{q}{q-2}} dx\right) ^{\frac{q-2}{q}} \left( \int_{B_\epsilon} \left( |u_n|^2 \right) ^{\frac{q}{2}}dx\right)^{\frac{2}{q}}
		\le |B_\epsilon|^ {\frac{q-2}{q}} \left\| u_n \right\| ^2_{L^q(\Omega)}.
	\end{equation*}
	The weak convergence of $\left\{u_n\right\}$ in $H_{w,0}^1(\Omega)$ and \eqref{LP} imply that this sequence is bounded in $L^q(\Omega)$, then (note that $q>2$)
	$$
	\int_{B_\epsilon}\left|u_n\right|^2 d x \le C |B_\epsilon|^\frac{q-2}{q}\left\| u_n\right\|^{2}_{H_{w,0}^1(B_\epsilon)}\leq C |B_\epsilon|^\frac{q-2}{q}.
	$$
	Letting $\epsilon\rightarrow 0^+$ in \eqref{UN} shows that $\left\|u_n\right\|_{L^2(\Omega)} \rightarrow 0$ as $n \rightarrow \infty$, completing the proof.
\end{proof}
Next, we use the Lax-Milgram theorem to show that equation \eqref{08.15.1} has a unique weak solution $u\in H_{w,0}^1(\Omega)$ in the sense of 
\begin{equation*}
	\int_\Omega (\nabla u\cdot \nabla v) w\mathrm d x=\int_\Omega fv\mathrm d x
\end{equation*}
for all $v\in H_{w,0}^1(\Omega)$. 
\begin{lemma}\label{08.15.L2}
	For each $f\in L^2(\Omega;w^{-1})$, there exists a unique solution for the equation \eqref{08.15.1}. 
\end{lemma}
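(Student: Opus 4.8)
The plan is to cast the weak formulation as a variational problem and apply the Lax--Milgram theorem on the Hilbert space $H_{w,0}^1(\Omega)$. Define the bilinear form
\[
a(u,v) = \int_\Omega (\nabla u \cdot \nabla v)\, w \, \mathrm dx, \qquad u,v \in H_{w,0}^1(\Omega),
\]
and the linear functional $F(v) = \int_\Omega f v \, \mathrm dx$. A weak solution is precisely a $u \in H_{w,0}^1(\Omega)$ with $a(u,v) = F(v)$ for all $v$, so it suffices to verify the three hypotheses of Lax--Milgram: boundedness of $a$, coercivity of $a$, and boundedness of $F$.

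Boundedness of $a$ is immediate from the Cauchy--Schwarz inequality applied in $L^2(\Omega; w)$, giving $|a(u,v)| \le \|\nabla u\|_{L^2(\Omega;w)} \|\nabla v\|_{L^2(\Omega;w)} \le \|u\|_{H_w^1(\Omega)} \|v\|_{H_w^1(\Omega)}$. For the functional, I would split $f v = (f w^{-1/2})(v w^{1/2})$ and apply Cauchy--Schwarz to obtain $|F(v)| \le \|f\|_{L^2(\Omega; w^{-1})} \|v\|_{L^2(\Omega; w)} \le \|f\|_{L^2(\Omega; w^{-1})} \|v\|_{H_w^1(\Omega)}$, which uses exactly the hypothesis $f \in L^2(\Omega; w^{-1})$.

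The main obstacle --- and the step where the structure of the degenerate weight genuinely enters --- is coercivity. The difficulty is that the $H_w^1(\Omega)$-norm contains the term $\int_\Omega u^2 w \, \mathrm dx$, which is not directly controlled by $a(u,u) = \int_\Omega |\nabla u|^2 w \, \mathrm dx$. To bridge this gap I would invoke the weighted Hardy inequality \eqref{hardy61} of Lemma \ref{08.15.L1}. Writing $m = \sup_{x\in\Omega}|x| + 1$, the elementary bound $|x|^\alpha = |x|^2 |x|^{\alpha - 2} \le m^2 |x|^{\alpha - 2}$ combined with \eqref{hardy61} yields
\[
\int_\Omega u^2 w \, \mathrm dx \le m^2 \int_\Omega |x|^{\alpha - 2} u^2 \, \mathrm dx \le \frac{4m^2}{(N-2+\alpha)^2} \int_\Omega |x|^\alpha |\nabla u|^2 \, \mathrm dx.
\]
Since $N \ge 2$ and $\alpha \in (0,2)$ ensure $N - 2 + \alpha > 0$, the constant is finite; adding $\int_\Omega |\nabla u|^2 w\,\mathrm dx$ to both sides gives $\|u\|_{H_w^1(\Omega)}^2 \le C_0\, a(u,u)$ with $C_0 = 1 + 4m^2/(N-2+\alpha)^2$, that is, coercivity with constant $C_0^{-1}$.

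With all three hypotheses verified, the Lax--Milgram theorem furnishes a unique $u \in H_{w,0}^1(\Omega)$ satisfying $a(u,v) = F(v)$ for all test functions $v$, which is the desired unique weak solution. I do not expect genuine difficulty beyond correctly tracking the Hardy constant; the only point requiring care is ensuring the positivity $N - 2 + \alpha > 0$ so that the Hardy inequality is non-degenerate, which is guaranteed by the standing assumptions on $N$ and $\alpha$.
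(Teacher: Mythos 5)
Your proposal is correct and follows essentially the same route as the paper: both cast the problem via the bilinear form $a(u,v)=\int_\Omega (\nabla u\cdot\nabla v)\,w\,\mathrm dx$ on $H_{w,0}^1(\Omega)$, bound the functional $v\mapsto\int_\Omega fv\,\mathrm dx$ using Cauchy--Schwarz together with the weighted Hardy inequality of Lemma \ref{08.15.L1}, and conclude with Lax--Milgram. If anything, your treatment of coercivity is more careful than the paper's, which simply asserts $B_w[u,u]=\|u\|_{H_{w,0}^1(\Omega)}$ (implicitly using the gradient term as an equivalent norm), whereas you derive the equivalence explicitly from \eqref{hardy61}.
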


\begin{proof}
	Denote 
	\begin{equation*}
		B_w[u,v]=\int_\Omega (\nabla u\cdot \nabla v)w\mathrm d x \mbox{ for all } u, v\in H_{w,0}^1(\Omega). 
	\end{equation*}
	It is easily verified that $B_w[\cdot, \cdot]: H_{w,0}^1(\Omega)\times H_{w,0}^1(\Omega)\rightarrow \mathbb{R}$ be a bilinear form. 
	
	On one hand, we have 
	\begin{equation*}
		\left|B_w[u,v]\right|\leq \|u\|_{H_{w,0}^1(\Omega)}\|v\|_{H_{w,0}^1(\Omega)}, 
	\end{equation*}
	and $B_w[u,u]=\|u\|_{H_{w,0}^1(\Omega)}$. On the other hand,  we have 
	\begin{equation*}
		\left|\int_\Omega f v\mathrm d x\right| \leq \|f\|_{L^2(\Omega; w^{-1})}\|v\|_{L^2(\Omega; w)}\leq C\|f\|_{L^2(\Omega; w^{-1})}\|v\|_{H_{w,0}^1(\Omega)}
	\end{equation*}
	by Cauchy inequality and \eqref{hardy62} in Lemma \ref{08.15.L1}, i.e., $f: H_{w,0}^1(\Omega)\rightarrow\mathbb{R}$ is a bounded linear functional on $H_{w,0}^1(\Omega)$. 
	
	Finally, by the Lax-Milgram theorem, we obtain that there exists a unique $u\in H_{w,0}^1(\Omega)$ satisfying \eqref{08.15.1}. 
\end{proof}

\section{Approximations}\label{S3}
Our approach is to approximate the solution of a degenerate equation by a sequence of solutions to non-degenerate equations that satisfy the uniform ellipticity condition.


Let 
\begin{equation}\label{we}
	w_\epsilon=
	\begin{cases}
		|x|^\alpha, & |x|\geq \epsilon,\\
		( \frac{3}{4}|x|^2+ \frac{1}{4}\epsilon^2)^ \frac{\alpha}{2}, &|x|\leq \epsilon. 
	\end{cases}
\end{equation}
Then it is clear that $w_\epsilon\in C^{0,1}(\overline \Omega)$ since $\alpha\in (0,2)$, $w_\epsilon$ is a radial convex function on $\mathbb{R}^N$ and nondecreasing on $[0,\infty)$, and $( \frac{\epsilon}{2})^\alpha\leq w_\epsilon\leq \epsilon^\alpha$ in $B_\epsilon$, and 
\begin{equation*}
	\nabla w_\epsilon=
	\begin{cases}
		\alpha |x|^{\alpha-2}x, & |x|> \epsilon,\\
		\alpha\left( \frac{3}{4}|x|^2+ \frac{1}{4}\epsilon^2\right)^{ \frac{\alpha}{2}-1} \frac{3}{4}x, &|x|< \epsilon. 
	\end{cases}
\end{equation*}
It is worth noting that,  our approximation methods is different from the one used in other literature (see \cite{MR2106129,cannarsa2016,wu2020carleman}), such as setting a non-degenerate coefficient $|x+\epsilon|^\alpha$ to approximate the degenerate coefficient $|x|^\alpha$, which takes the form $|x+\epsilon|^\alpha$ over the entire domain $\Omega$. However, in this paper, our setup of $w_\epsilon$ ensures that the approximate  coefficients do not depend on $\epsilon$ outside $B_\epsilon$ while  approximating the original degenerate coefficient within $B_\epsilon$. This allows us to achieve better estimates of the solution and obtain improved regularity results, even in the high-dimensional case.	

For each $k\in\mathbb{N}$, we denote   $w_{ \frac{1}{k}}$ by $w_k$  and  
consider the following approximate equation
\begin{equation}\label{08.15.3}
	\begin{cases}
		-\mathrm{div}(w_k \nabla u_k)=f_k, &\mbox{in }\Omega,\\
		u_k=0, &\mbox{on }\partial \Omega
	\end{cases}
\end{equation}
with $f_k\in L^2(\Omega; w_k^{-1})$. 
We say that $u_k\in H_{w_k,0}^1(\Omega)$ is a {\it weak solution} of \eqref{08.15.3}, if 
\begin{equation*}
	\int_\Omega (\nabla u_k\cdot \nabla v)w_k\mathrm d x=\int_\Omega f_k v\mathrm d x
\end{equation*}
for all $v\in H_{w_k,0}^1(\Omega)$. 

We note that $H_{w_k,0}^1(\Omega)=H_0^1(\Omega)$ since $( \frac{1}{k})^\alpha\leq w_k\leq m^\alpha$ ($m := \sup_{x\in \Omega} |x| + 1$ defined in Lemma \ref{08.15.L1}) for each $k\in\mathbb{N}$, where $H_0^1(\Omega)$ is the classical Sobolev spaces. 

As Lemma \ref{08.15.L1}, we provide a proof of the Hardy inequality for the non-degenerate equation.
\begin{lemma}\label{08.16.L2}
	Let $u\in H_{w_k,0}^1(\Omega)$. Then 
	\begin{equation}\label{08.20.1}
		(N+\alpha-2)\|w_k^{ \frac{1}{2}- \frac{1}{\alpha}}u\|_{L^2(\Omega)}\leq 2\|u\|_{H_{w_k,0}^1(\Omega)}. 
	\end{equation}
	Moreover, we have 
	\begin{equation}\label{08.20.2}
		\|u\|_{L^2(\Omega; w_k)}\leq  \frac{2m}{N+\alpha-2}\||\nabla u|\|_{L^2(\Omega;w_k)}.
	\end{equation}
\end{lemma}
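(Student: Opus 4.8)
The plan is to imitate the proof of Lemma \ref{08.15.L1}, but with the radial field $|x|^{\alpha-2}x$ replaced by a field adapted to the regularised weight $w_k$ from \eqref{we}. Recall first that $H^1_{w_k,0}(\Omega)=H^1_0(\Omega)$ and that the quantity to be controlled satisfies $\bigl(w_k^{\frac12-\frac1\alpha}\bigr)^2=w_k^{1-\frac2\alpha}$. Two elementary pointwise facts about $w_k$ will drive everything: outside $B_\epsilon$ one has $w_k^{2/\alpha}=|x|^2$, while inside $B_\epsilon$, since $|x|\le\epsilon$,
\begin{equation*}
w_k^{2/\alpha}=\tfrac34|x|^2+\tfrac14\epsilon^2\ \ge\ |x|^2 ,
\end{equation*}
so that $|x|^2\le w_k^{2/\alpha}\le m^2$ throughout $\Omega$ (the upper bound because $\epsilon=\tfrac1k\le1\le m$).

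The central step is a divergence computation for the vector field
\begin{equation*}
V:=w_k^{1-\frac2\alpha}\,x .
\end{equation*}
Since $w_k^{1-2/\alpha}$ equals $|x|^{\alpha-2}$ on $\{|x|\ge\epsilon\}$ and $(\tfrac34|x|^2+\tfrac14\epsilon^2)^{\frac\alpha2-1}$ on $\{|x|\le\epsilon\}$, the field $V$ is Lipschitz on $\overline\Omega$ (continuous across $|x|=\epsilon$, with bounded gradients on each side), so $\operatorname{div}V\in L^\infty(\Omega)$ is the a.e.\ piecewise classical divergence, with no interface contribution. A direct calculation gives $\operatorname{div}V=(N+\alpha-2)\,w_k^{1-\frac2\alpha}$ on $\Omega^\epsilon$, while on $B_\epsilon$
\begin{equation*}
\operatorname{div}V=(N+\alpha-2)\,w_k^{1-\frac2\alpha}+\tfrac{2-\alpha}{4}\,\epsilon^2\bigl(\tfrac34|x|^2+\tfrac14\epsilon^2\bigr)^{\frac\alpha2-2},
\end{equation*}
where the extra term is nonnegative because $\alpha\in(0,2)$. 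Thus $\operatorname{div}V\ge(N+\alpha-2)\,w_k^{1-\frac2\alpha}$ a.e.\ in $\Omega$; the regularisation at the degenerate point, far from being an obstruction, only \emph{adds} a favourable term.

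With this in hand I would integrate by parts. For $u\in C_0^\infty(\Omega)$, hence for all $u\in H^1_0(\Omega)=H^1_{w_k,0}(\Omega)$ by density (every factor being bounded), the vanishing boundary values kill the boundary term and
\begin{align*}
(N+\alpha-2)\int_\Omega w_k^{1-\frac2\alpha}u^2\,\mathrm dx
&\le \int_\Omega(\operatorname{div}V)\,u^2\,\mathrm dx=-2\int_\Omega(V\cdot\nabla u)\,u\,\mathrm dx\\
&\le 2\int_\Omega w_k^{1-\frac2\alpha}|x|\,|\nabla u|\,|u|\,\mathrm dx .
\end{align*}
Writing the last integrand as $\bigl(w_k^{1/2}|\nabla u|\bigr)\bigl(w_k^{\frac12-\frac2\alpha}|x|\,|u|\bigr)$ and applying Cauchy--Schwarz exactly as in Lemma \ref{08.15.L1}, the second factor produces $\int_\Omega w_k^{1-\frac4\alpha}|x|^2u^2$, which by $|x|^2\le w_k^{2/\alpha}$ is bounded by $\int_\Omega w_k^{1-\frac2\alpha}u^2$. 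Cancelling one factor of $\bigl(\int_\Omega w_k^{1-2/\alpha}u^2\bigr)^{1/2}$ yields the sharpened bound $(N+\alpha-2)\|w_k^{\frac12-\frac1\alpha}u\|_{L^2(\Omega)}\le 2\||\nabla u|\|_{L^2(\Omega;w_k)}$, and since $\||\nabla u|\|_{L^2(\Omega;w_k)}\le\|u\|_{H^1_{w_k,0}(\Omega)}$ this gives \eqref{08.20.1}. Finally, \eqref{08.20.2} follows from the upper bound $w_k^{2/\alpha}\le m^2$: writing $u^2w_k=w_k^{2/\alpha}\,w_k^{1-\frac2\alpha}u^2\le m^2\,w_k^{1-\frac2\alpha}u^2$ and integrating gives $\|u\|_{L^2(\Omega;w_k)}\le m\,\|w_k^{\frac12-\frac1\alpha}u\|_{L^2(\Omega)}$, which combined with the gradient form of \eqref{08.20.1} produces the constant $\tfrac{2m}{N+\alpha-2}$.

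The main obstacle I anticipate is purely the low regularity of $w_k$ at the sphere $|x|=\epsilon$: one must be sure the integration by parts produces no singular interface term and that the sign of the interface contribution is controlled. The remedy is to work with the \emph{continuous} field $V=w_k^{1-2/\alpha}x$ (rather than with $\nabla w_k$, whose normal component does jump), for which the distributional divergence is genuinely the bounded a.e.\ divergence; the explicit computation above then shows that the jump only improves the inequality. Everything else is the Hardy-type argument of Lemma \ref{08.15.L1}, transplanted almost verbatim.
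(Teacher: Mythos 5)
Your proposal is correct and follows essentially the same route as the paper: both integrate the identity $\int_\Omega w_k^{1-\frac2\alpha}x\cdot\nabla(u^2)\,\mathrm dx$ by parts using the Lipschitz field $w_k^{1-\frac2\alpha}x$, observe that the extra divergence term on $B_\epsilon$ has the favourable sign because $\alpha\in(0,2)$, apply Cauchy--Schwarz together with $|x|^2\le w_k^{2/\alpha}$, and then deduce \eqref{08.20.2} from $w_k^{2/\alpha}\le m^2$. The only difference is cosmetic bookkeeping in how the Cauchy--Schwarz factors are split.
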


\begin{proof}
	Denote $\epsilon= \frac{1}{k}$. We shall prove 
	\begin{equation*}
		(N+\alpha-2)\|u\|_{L^2(\Omega; w_\epsilon)}\leq 2\|u\|_{H_{w_\epsilon,0}^1(\Omega)}
	\end{equation*}
	for each $u\in H_{w_\epsilon,0}^1(\Omega)$. 
	
	Since $w_\epsilon\in C^{0,1}(\overline\Omega)$ (i.e., $w_\epsilon\in W^{1,\infty}(\Omega)$), we have
	\begin{equation*}
		\begin{split}
			&\hspace{4.5mm}2\int_{\Omega} w_\epsilon^{1- \frac{2}{\alpha}}u(x\cdot\nabla u)\mathrm d x
			=\int_\Omega w_\epsilon^{1- \frac{2}{\alpha}}x\cdot \nabla u^2\mathrm d x\\
			&=\int_\Omega \mathrm{div}\left(w_\epsilon^{1- \frac{2}{\alpha}}u^2x\right)\mathrm d x-\int_\Omega u^2\mathrm{div}\left(w_\epsilon^{1- \frac{2}{\alpha}}x\right)\mathrm d x\\ 
			&=-(N+\alpha-2)\int_\Omega u^2w_\epsilon^{1- \frac{2}{\alpha}} \mathrm d x- \frac{2-\alpha}{4}\epsilon^2\int_{B_\epsilon}\left( \frac{3}{4}|x|^2+ \frac{1}{4}\epsilon^2\right)^{ \frac{\alpha}{2}-2} u^2\mathrm d x. 
		\end{split}
	\end{equation*}
	Then 
	\begin{equation*}
		\begin{split}
			&\hspace{4.5mm}(N+\alpha-2)\int_\Omega u^2w_\epsilon^{1- \frac{2}{\alpha}}\mathrm d x\\
			&\leq -2\int_\Omega w_\epsilon^{1- \frac{2}{\alpha}}u(x\cdot \nabla u)\mathrm d x=-2\int_\Omega \left(w_\epsilon^{ \frac{1}{2}- \frac{1}{\alpha}}u\right)\left(w_\epsilon^{ \frac{1}{2}- \frac{1}{\alpha}}x\cdot\nabla u\right)\mathrm d x\\
			&\leq 2\left(\int_\Omega w_\epsilon^{1- \frac{2}{\alpha}}u^2\mathrm d x\right)^ \frac{1}{2}\left(\int_\Omega w_\epsilon^{1- \frac{2}{\alpha}}|x|^2|\nabla u|^2\mathrm d x\right)^ \frac{1}{2}\\
			&\leq 2\left(\int_\Omega w_\epsilon^{1- \frac{2}{\alpha}}u^2\mathrm d x\right)^ \frac{1}{2}\left(\int_\Omega |\nabla u|^2w_\epsilon\mathrm d x\right)^ \frac{1}{2}
		\end{split}
	\end{equation*}
	by $\alpha\in (0,2)$ and $|x|^2\leq  \frac{3}{4}|x|^2+ \frac{1}{4}\epsilon^2$ on $B_\epsilon$. This shows that 
	\begin{equation*}
		(N+\alpha-2)\|w_\epsilon^{ \frac{1}{2}- \frac{1}{\alpha}}u\|_{L^2(\Omega)}\leq 2\||\nabla u|\|_{L^2(\Omega; w_\epsilon)}. 
	\end{equation*}
	
	Finally, by \eqref{08.20.1} and  $ \frac{1}{4}\epsilon^2\leq  \frac{3}{4}|x|^2+ \frac{1}{4}\epsilon^2\leq m^2$ in $\Omega$, we get \eqref{08.20.2}. 
\end{proof}

To prove that the solution of the non-degenerate equation converges weakly in the solution space to the solution of the degenerate equation, we 
first show  that the approximate solutions are  bounded.

\begin{lemma}\label{08.15.L3}
	Let $u_k$ be a solution of \eqref{08.15.3} with $f_k\in L^2(\Omega;w_k^{-1})$. For each $k\in\mathbb{N}$, then
	\begin{equation*}
		\|u_k\|_{H_{w_k,0}^1(\Omega)}\leq C \|f_k\|_{L^2(\Omega;w_k^{-1})}, 
	\end{equation*}
	where the constant $C>0$ depends only on $\alpha, N$ and $\Omega$. 
\end{lemma}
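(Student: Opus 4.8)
The plan is to run a standard energy (coercivity) estimate: test the weak formulation of \eqref{08.15.3} against the solution itself, then absorb the resulting lower-order term using the weighted Hardy inequality already proved in Lemma \ref{08.16.L2}. The entire analytic content is elementary; the one point that genuinely matters is that the final constant $C$ be independent of $k$, since this uniform bound is exactly what drives the later weak-convergence argument that passes the approximate solutions $u_k$ to the solution of the degenerate problem \eqref{08.15.1}.

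First I would take $v=u_k\in H_{w_k,0}^1(\Omega)$ as test function in the weak formulation, obtaining
\[
	\int_\Omega |\nabla u_k|^2 w_k \, \mathrm dx = \int_\Omega f_k u_k \, \mathrm dx,
\]
so the left-hand side is $\||\nabla u_k|\|_{L^2(\Omega;w_k)}^2$. To estimate the right-hand side I would split the integrand as $f_k u_k = (w_k^{-1/2}f_k)(w_k^{1/2}u_k)$ and apply the Cauchy--Schwarz inequality,
\[
	\left|\int_\Omega f_k u_k \, \mathrm dx\right| \le \|f_k\|_{L^2(\Omega;w_k^{-1})}\,\|u_k\|_{L^2(\Omega;w_k)}.
\]
Invoking \eqref{08.20.2} from Lemma \ref{08.16.L2} to bound $\|u_k\|_{L^2(\Omega;w_k)}$ by $\tfrac{2m}{N+\alpha-2}\||\nabla u_k|\|_{L^2(\Omega;w_k)}$ and dividing through by $\||\nabla u_k|\|_{L^2(\Omega;w_k)}$ (the vanishing case being trivial) then yields the gradient bound
\[
	\||\nabla u_k|\|_{L^2(\Omega;w_k)} \le \frac{2m}{N+\alpha-2}\,\|f_k\|_{L^2(\Omega;w_k^{-1})}.
\]

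Finally I would recover the full norm $\|u_k\|_{H_{w_k,0}^1(\Omega)}^2 = \|u_k\|_{L^2(\Omega;w_k)}^2 + \||\nabla u_k|\|_{L^2(\Omega;w_k)}^2$ by controlling the lower-order term once more via \eqref{08.20.2}, so that
\[
	\|u_k\|_{H_{w_k,0}^1(\Omega)}^2 \le \left[1+\left(\frac{2m}{N+\alpha-2}\right)^2\right]\||\nabla u_k|\|_{L^2(\Omega;w_k)}^2,
\]
and combining with the gradient bound above. Taking square roots gives the claim with $C=\tfrac{2m}{N+\alpha-2}\sqrt{1+\left(2m/(N+\alpha-2)\right)^2}$. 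There is no real analytic obstacle here---the argument is coercivity plus Cauchy--Schwarz---and the only thing I would be careful to emphasize is the $k$-independence of $C$: the Hardy constant in \eqref{08.20.2} equals $2m/(N+\alpha-2)$, with $m=\sup_{x\in\Omega}|x|+1$ fixed by $\Omega$ alone, precisely because $w_k$ is bounded below and $|x|$ is bounded above on $\Omega$ uniformly in $\epsilon=1/k$. Hence $C$ depends only on $\alpha$, $N$, and $\Omega$, as required.
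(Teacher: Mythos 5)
Your argument is correct and is essentially identical to the paper's proof: test the weak formulation against $u_k$, apply Cauchy--Schwarz to the right-hand side, and absorb $\|u_k\|_{L^2(\Omega;w_k)}$ via the Hardy inequality \eqref{08.20.2} of Lemma \ref{08.16.L2}, whose constant $2m/(N+\alpha-2)$ is independent of $k$. The paper states this more tersely, but the decomposition, the key lemma invoked, and the source of the $k$-uniformity are the same.
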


\begin{proof}
	Let $u_k\in H_{w_k,0}^1(\Omega)$ be the test function. Then 
	\begin{equation*}
		\begin{split}
			\int_\Omega |\nabla u_k|^2w_k\mathrm d x\leq \left(\int_\Omega |f_k w_k^{-1}|^2w_k\mathrm d x\right)^ \frac{1}{2}\left(\int_\Omega u_k^2w_k\mathrm d x\right)^ \frac{1}{2}.
		\end{split} 
	\end{equation*} 
	By \eqref{08.20.2} in Lemma  \ref{08.16.L2}, we get 
	\begin{equation*}
		\|u_k\|_{H_{w_k,0}^1(\Omega)}\leq C\|f_k\|_{L^2(\Omega;w_k^{-1})}\,, 
	\end{equation*}
	where the constant $C>0$ depends only on $\alpha, N$ and $\Omega$. 
\end{proof}

Now, we are ready to show   the  existence of the solution for degenerate equation \eqref{08.15.1}
by using the  approximation via the solutions of the non-degenerate equations. 

\begin{lemma}\label{08.15.L5}
	Let $u_k\in H_{w_k,0}(\Omega)$ be the solution of \eqref{08.15.3} with $f_k=f$,  where $f\in L^2(\Omega;w^{-1})$  and $k\in\mathbb{N}$. Then,
	there is a $u_0\in H_{w,0}^1(\Omega)$ such that 
	\begin{equation}\label{08.15.4}
		u_k\rightharpoonup u_0 \mbox{ weakly in } H_{w,0}^1(\Omega), 
	\end{equation}
	and 
	\begin{equation}\label{08.15.12}
		u_k\rightarrow u_0 \mbox{ strongly in } L^2(\Omega). 
	\end{equation}
	Moreover,  $u_0 $ is  the unique solution of \eqref{08.15.1} with $f\in L^2(\Omega;w^{-1})$. 
\end{lemma}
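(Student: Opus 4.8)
The plan is to obtain a uniform bound on the approximate solutions $u_k$, extract a weakly convergent subsequence, upgrade it to strong $L^2$ convergence via compactness, pass to the limit in the weak formulation, and finally use uniqueness to promote subsequential convergence to convergence of the full sequence. The two structural facts about the weights that drive everything are read off from \eqref{we}: one has $w\le w_k\le m^\alpha$ pointwise on $\Omega$, with $w_k=w$ outside $B_{1/k}$ and $0\le w_k\le(1/k)^\alpha$ on $B_{1/k}$, and $w_k\to w$ pointwise as $k\to\infty$. Since $w\le w_k$ gives $w_k^{-1}\le w^{-1}$, we have $\|f\|_{L^2(\Omega;w_k^{-1})}\le\|f\|_{L^2(\Omega;w^{-1})}$ for every $k$, so Lemma \ref{08.15.L3} yields $\|u_k\|_{H_{w_k,0}^1(\Omega)}\le C\|f\|_{L^2(\Omega;w^{-1})}$ with $C$ independent of $k$. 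Because $w\le w_k$, this transfers to the fixed weighted norm, $\|u_k\|_{H_{w,0}^1(\Omega)}\le\|u_k\|_{H_{w_k,0}^1(\Omega)}\le C\|f\|_{L^2(\Omega;w^{-1})}$, so $\{u_k\}$ is bounded in $H_{w,0}^1(\Omega)$ (note $u_k\in H_0^1(\Omega)\subset H_{w,0}^1(\Omega)$ because $w$ is bounded).

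Since $H_{w,0}^1(\Omega)$ is a Hilbert space, boundedness yields a subsequence with $u_{k_j}\rightharpoonup u_0$ weakly in $H_{w,0}^1(\Omega)$; in particular $\partial_i u_{k_j}\rightharpoonup \partial_i u_0$ weakly in $L^2(\Omega;w)$. By the compact embedding of Lemma \ref{08.15.L4}, after passing to a further subsequence, $u_{k_j}\to u_0$ strongly in $L^2(\Omega)$. The heart of the argument is to pass to the limit in the weak formulation $\int_\Omega \nabla u_{k_j}\cdot\nabla v\,w_{k_j}\,\mathrm dx=\int_\Omega f v\,\mathrm dx$ tested against $v\in C_0^\infty(\Omega)\subset H_0^1(\Omega)$. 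I would split
\[
\int_\Omega \nabla u_{k_j}\cdot\nabla v\,w_{k_j}\,\mathrm dx-\int_\Omega \nabla u_0\cdot\nabla v\,w\,\mathrm dx=\int_\Omega \nabla u_{k_j}\cdot\nabla v\,(w_{k_j}-w)\,\mathrm dx+\int_\Omega (\nabla u_{k_j}-\nabla u_0)\cdot\nabla v\,w\,\mathrm dx.
\]
The second term vanishes in the limit because $\nabla v\in L^2(\Omega;w)^N$ and $\nabla u_{k_j}\rightharpoonup\nabla u_0$ weakly in $L^2(\Omega;w)^N$.

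The main obstacle is the first term, which carries the moving weight. Here I would use that $w_{k_j}-w\ge0$ is supported in $B_{1/k_j}$, that $|\nabla v|$ is bounded, and Cauchy–Schwarz:
\[
\left|\int_{B_{1/k_j}}\nabla u_{k_j}\cdot\nabla v\,(w_{k_j}-w)\,\mathrm dx\right|\le\|\nabla v\|_\infty\left(\int_\Omega |\nabla u_{k_j}|^2 w_{k_j}\,\mathrm dx\right)^{1/2}\left(\int_{B_{1/k_j}}(w_{k_j}-w)\,\mathrm dx\right)^{1/2}.
\]
The first factor is uniformly bounded by the estimate of the first paragraph, while the second tends to $0$ because $0\le w_{k_j}-w\le(1/k_j)^\alpha$ on $B_{1/k_j}$ and $|B_{1/k_j}|\to0$; hence the whole term vanishes. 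This gives $\int_\Omega \nabla u_0\cdot\nabla v\,w\,\mathrm dx=\int_\Omega f v\,\mathrm dx$ for all $v\in C_0^\infty(\Omega)$, and by density of $C_0^\infty(\Omega)$ in $H_{w,0}^1(\Omega)$ together with the boundedness of both sides in $v$ (as established in Lemma \ref{08.15.L2}), the identity extends to all $v\in H_{w,0}^1(\Omega)$. Thus $u_0$ is a weak solution of \eqref{08.15.1}.

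Finally, uniqueness from Lemma \ref{08.15.L2} shows that $u_0$ is independent of the chosen subsequence: any subsequence of $\{u_k\}$ admits a further subsequence converging weakly in $H_{w,0}^1(\Omega)$ and strongly in $L^2(\Omega)$ to a solution of \eqref{08.15.1}, which must equal $u_0$. A standard subsequence argument therefore upgrades \eqref{08.15.4} and \eqref{08.15.12} from subsequential to full-sequence convergence and identifies $u_0$ as the unique solution of \eqref{08.15.1}, completing the proof.
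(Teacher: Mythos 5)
Your proposal is correct and follows essentially the same route as the paper: the uniform bound from Lemma \ref{08.15.L3} combined with $w\le w_k$, weak compactness plus the compact embedding of Lemma \ref{08.15.L4}, passing to the limit in the weak formulation by isolating the region $B_{1/k}$ where $w_k\neq w$ and killing it via Cauchy--Schwarz against the uniform energy bound, and finally uniqueness from Lemma \ref{08.15.L2}. Your single-term decomposition $\int_\Omega\nabla u_{k}\cdot\nabla v\,(w_{k}-w)\,\mathrm dx$ is just a tidier packaging of the paper's two separate small-ball estimates, and your explicit density extension to all $v\in H_{w,0}^1(\Omega)$ and the subsequence-to-full-sequence upgrade are small points the paper glosses over.
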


\begin{proof}
	Since $f\in L^2(\Omega;w^{-1})$, we have 
	\begin{equation*}
		\int_\Omega (fw_k)^{-1}w_k\mathrm d x=\int_\Omega (fw^{-1})^2(ww_k^{-1})w\mathrm d x\leq \int_\Omega (fw^{-1})^2w\mathrm d x
	\end{equation*}
	according to $w\leq w_k$ for all $k\in\mathbb{N}$. i.e., $f\in L^2(\Omega; w_k^{-1})$ for all $k\in\mathbb{N}$. 
	From Lemma \ref{08.15.L3}, for each $k\in\mathbb{N}$, since $w\leq w_k$ for all $k\in\mathbb{N}$, we have 
	\begin{equation*}
		\|u_k\|_{H_{w,0}^1(\Omega)}\leq \|u_k\|_{H_{w_k,0}^1(\Omega)}\leq C\|f\|_{L^2(\Omega; w_k^{-1})}\leq C\|f\|_{L^2(\Omega; w^{-1})},
	\end{equation*}
	where the constant $C>0$ depends only on $\alpha, N$ and $\Omega$. 
	Then there exists a subsequence of $\{u_k\}_{k\in\mathbb{N}}$, still denote by itself, and $\widetilde u_0\in H_{w,0}^1(\Omega)$, such that 
	\begin{equation*}
		u_k\rightharpoonup \widetilde u_0 \mbox{ weakly in } H_{w,0}^1(\Omega), 
	\end{equation*}
	and 
	\begin{equation*}
		u_k\rightarrow \widetilde u_0 \mbox{ strongly in } L^2(\Omega). 
	\end{equation*}
	by $H_{w,0}^1(\Omega)\subset \subset  L^2(\Omega)$ is compact (see Lemma \ref{08.15.L4}). 
	
	Now, we prove $\widetilde u_0$ satisfies the equation \eqref{08.15.1} with $f\in L^2(\Omega;w^{-1})$. 
	
	Let $\psi\in \mathcal{D}(\Omega)$. Let $\epsilon>0$. By \eqref{08.15.4}, there exists $k_0\in\mathbb{N}$, such that 
	\begin{equation}\label{08.15.5}
		\left|\int_\Omega (\nabla u_k\cdot \nabla\psi)w\mathrm d x-\int_\Omega (\nabla \widetilde u_0\cdot \nabla \psi)w\mathrm d x\right|< \frac{1}{2}\epsilon\mbox{ when } k\geq k_0. 
	\end{equation} 
	Since $u_k$ is a solution of \eqref{08.15.3} for each $k\in\mathbb{N}$, we have 
	\begin{equation}\label{08.18.9}
		\int_\Omega (\nabla u_k\cdot \nabla \psi) w_k\mathrm d x=\int_\Omega f\psi\mathrm d x. 
	\end{equation} 
	Note that  $w_k=w$ on $\Omega\setminus B_{ \frac{1}{k}}$ for $k\geq k_0$, we have 
	\begin{equation}\label{08.15.6}
		\begin{split}
			\int_\Omega (\nabla u_k\cdot \nabla \psi)w_k\mathrm d x
			&=\int_{\Omega\setminus B_{ \frac{1}{k}}}(\nabla u_k\cdot \nabla\psi)w\mathrm d x+\int_{B_{ \frac{1}{k}}}(\nabla u_k\cdot \nabla\psi)w_k\mathrm d x.
		\end{split}
	\end{equation}
	By Lemma \ref{08.15.L3} we have 		
	\begin{equation}\label{08.15.8} 
		\begin{split}
			\left|\int_{B_{ \frac{1}{k}}}(\nabla u_k\cdot \nabla\psi)w_k\mathrm d x\right|
			&\leq \left(\int_{B_{ \frac{1}{k}}}|\nabla u_k|^2w_k\mathrm d x\right)^ \frac{1}{2}\left(\int_{B_{ \frac{1}{k}}}|\nabla\psi|^2w_k\mathrm d x\right)^ \frac{1}{2}\\
			&\leq \left(\sup_{x\in\Omega}|\nabla\psi(x)|\right)\left(\int_{\Omega}|\nabla u_k|^2w_k\mathrm d x\right)^ \frac{1}{2}w_k(B_{ \frac{1}{k}})^ \frac{1}{2}\\
			&\leq C_{\psi, N, \alpha} \|f\|_{L^2(\Omega;w^{-1})}  \frac{1}{k^{ \frac{\alpha+N}{2}}}, 
		\end{split}
	\end{equation}
	where $C_{\psi, N, \alpha}>0$ is a constant that depends only on $\psi, \Omega, N$ and $\alpha$, and $w_k(B_{ \frac{1}{k}})^ \frac{1}{2} = \left(\int_{B_{ \frac{1}{k}}}w_k \mathrm d x \right)^{ \frac{1}{2}}$. Hence we can assume 
	\begin{equation}\label{08.15.7}
		\left|\int_{B_{ \frac{1}{k}}}(\nabla u_k\cdot \nabla\psi)w_k\mathrm d x\right|< \frac{1}{4}\epsilon\mbox{ when } k\geq k_0\,. 
	\end{equation}
	On the other hand, by $w\leq w_k$,  and by the same argument as for \eqref{08.15.8}, we have 
	\begin{equation*}
		\begin{split}
			\left|\int_{B_{ \frac{1}{k}}}(\nabla u_k\cdot \nabla\psi)w\mathrm d x\right|
			&\leq \int_{B_{ \frac{1}{k}}}|\nabla u_k||\nabla \psi|w_k\mathrm d x\\
			&\leq \left(\int_{B_{ \frac{1}{k}}}|\nabla u_k|^2w_k\mathrm d x\right)^ \frac{1}{2}\left(\int_{B_{ \frac{1}{k}}}|\nabla\psi|^2w_k\mathrm d x\right)^ \frac{1}{2}\\
			&\leq C_{\psi, N,\alpha}\|f\|_{L^2(\Omega;w^{-1})} \frac{1}{k^{ \frac{\alpha+N}{2}}}, 
		\end{split}
	\end{equation*}
	hence we also can assume 
	\begin{equation}\label{08.15.9}
		\left|\int_{B_{ \frac{1}{k}}}(\nabla u_k\cdot \nabla\psi)w\mathrm d x\right|< \frac{1}{4}\epsilon\mbox{ when } k\geq k_0. 
	\end{equation}
	From \eqref{08.15.5}, \eqref{08.18.9}, \eqref{08.15.6}, \eqref{08.15.7} and \eqref{08.15.9},   we obtain 
	\begin{equation*}
		\begin{split}
			&\hspace{4.5mm}\left|\int_\Omega(\nabla \widetilde u_0\cdot \nabla \psi)w\mathrm d x-\int_\Omega f\psi\mathrm d x\right|\\
			&\leq \left|\int_\Omega (\nabla u_k\cdot \nabla \psi)w_k\mathrm d x-\int_\Omega(\nabla u_k\cdot \nabla\psi)w\mathrm d x\right|\\
			&\hspace{4.5mm}+\left|\int_\Omega (\nabla \widetilde u_0\cdot \nabla\psi)w\mathrm d x-\int_\Omega (\nabla u_k\cdot \nabla\psi)w\mathrm d x\right|\\
			&\hspace{4.5mm}+\left|\int_\Omega (\nabla u_k\cdot \nabla\psi)w_k\mathrm d x-\int_\Omega f\psi\mathrm d x\right|\\
			&\leq \left|\int_{B_{ \frac{1}{k}}}(\nabla u_k\cdot \nabla\psi)w_k\mathrm d x\right|+\left|\int_{B_{ \frac{1}{k}}} (\nabla u_k\cdot \nabla\psi)w\mathrm d x\right|+ \frac{1}{2}\epsilon<\epsilon.
		\end{split}
	\end{equation*}  
	This implies that 
	\begin{equation*}
		\int_\Omega (\nabla \widetilde u_0\cdot \nabla\psi)w\mathrm d x=\int_\Omega f\psi\mathrm d x. 
	\end{equation*}
	This proves that $\widetilde u_0$ is a solution of \eqref{08.15.1}. 
	
	Finally, by the uniqueness of the solution of the equation \eqref{08.15.1}, we get $\widetilde u_0=u_0$. 
	This complete the proof of the lemma. 
\end{proof}

Next, we transform the inhomogeneous problem 
\eqref{08.15.1} into a boundary value problem  to facilitate the subsequent proof of the three sphere inequality, and the proof of WUCP.

\begin{corollary}\label{08.19.C2}
	Let $u_0\in H_{w}^1(\Omega)$ be a solution of the equation \begin{equation}\label{08.19.1}
		\begin{cases}
			-\mathrm{div}(w\nabla u)=0, &\mbox{in }\Omega,\\
			u=g, &\mbox{on }\partial \Omega,
		\end{cases}
	\end{equation} 
	where $g\in H^{\frac{3}{2}}(\partial\Omega)$ is a given function.  Let $u_k\in H_{w_k}^1(\Omega)$ be a solution of the following equation 
	\begin{equation*}
		\begin{cases}
			-\mathrm{div}(w_k\nabla u_k)=0, &\mbox{in }\Omega,\\
			u_k=g, &\mbox{on }\partial\Omega. 
		\end{cases}
	\end{equation*}
	Then we have 
	\begin{equation*}
		\begin{split}  
			u_{k}
			&\rightarrow u_0 \mbox{ strongly in } L^2(\Omega). 
		\end{split}
	\end{equation*}
\end{corollary}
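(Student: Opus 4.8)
The plan is to reduce the inhomogeneous boundary-value problem to the homogeneous-Dirichlet problem already settled in Lemma \ref{08.15.L5} by subtracting a carefully chosen lift of the boundary datum $g$. The decisive device is to arrange the lift to \emph{vanish near the degenerate point} $0$, so that the resulting source term is supported away from the singularity of the weight and is insensitive to replacing $w$ by $w_k$ for large $k$.

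First I would fix $\delta>0$ with $\overline{B_\delta}\subset\Omega$ and construct $G\in H^2(\Omega)$ with trace $G=g$ on $\partial\Omega$ and $G\equiv 0$ on $B_\delta$. This is possible: since $g\in H^{\frac32}(\partial\Omega)$ and $\partial\Omega$ is $C^2$, the trace right-inverse gives a lift $\widetilde G\in H^2(\Omega)$ with $\widetilde G|_{\partial\Omega}=g$, and multiplying by a cutoff $\chi\in C^\infty(\overline\Omega)$ with $\chi\equiv 0$ on $B_\delta$ and $\chi\equiv 1$ near $\partial\Omega$ yields $G:=\chi\widetilde G$ with the required properties. Set $f:=\operatorname{div}(w\nabla G)$. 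Because $\nabla G$ is supported in $\Omega\setminus B_\delta$, where $w=|x|^\alpha$ is smooth and bounded above and below and $G\in H^2$, we get $f\in L^2(\Omega)$ with $\operatorname{supp} f\subset\Omega\setminus B_\delta$; hence $\int_\Omega f^2 w^{-1}\,\mathrm dx\le \delta^{-\alpha}\int_\Omega f^2\,\mathrm dx<\infty$, so $f\in L^2(\Omega;w^{-1})$.

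Next I would set $v_0:=u_0-G$ and $v_k:=u_k-G$; both have zero trace, so $v_0\in H_{w,0}^1(\Omega)$ and $v_k\in H_{w_k,0}^1(\Omega)=H_0^1(\Omega)$. Testing the weak form of \eqref{08.19.1} against $\phi\in H_{w,0}^1(\Omega)$ and integrating $\int_\Omega w\nabla G\cdot\nabla\phi$ by parts (the boundary term vanishing since $\phi$ has zero trace) shows $\int_\Omega w\nabla v_0\cdot\nabla\phi\,\mathrm dx=\int_\Omega f\phi\,\mathrm dx$, so $v_0$ is the weak solution of the homogeneous problem with source $f$; by uniqueness it coincides with the solution produced by Lemma \ref{08.15.L2}. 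The same computation for $u_k$ gives that $v_k$ solves $-\operatorname{div}(w_k\nabla v_k)=f_k$ with $f_k:=\operatorname{div}(w_k\nabla G)$ and zero Dirichlet data. Here the vanishing-near-$0$ choice pays off: once $\frac1k<\delta$ we have $w_k=w$ on $\operatorname{supp}\nabla G\subset\Omega\setminus B_\delta$, while $w_k\nabla G=0=w\nabla G$ on $B_\delta$, so $f_k=f$ for all large $k$.

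Finally, $v_k$ and $v_0$ are exactly the homogeneous-Dirichlet solutions with the common source $f\in L^2(\Omega;w^{-1})$ for all large $k$, so Lemma \ref{08.15.L5} (applied to the tail of the sequence, which is all that matters for the limit) yields $v_k\to v_0$ strongly in $L^2(\Omega)$. Adding back the fixed function $G$ gives $u_k=v_k+G\to v_0+G=u_0$ strongly in $L^2(\Omega)$, as claimed. The only genuinely delicate point is the construction of the lift: it must simultaneously match $g$ on $\partial\Omega$ and be flat near the interior degeneracy, which is precisely what makes the source term regular enough to lie in $L^2(\Omega;w^{-1})$ and stable under the approximation of $w$ by $w_k$; everything else is a direct reduction to the already-established convergence lemma.
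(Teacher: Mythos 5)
Your proposal is correct and follows essentially the same route as the paper: both construct an $H^2$ lift of $g$ (the paper uses the solution of $-\Delta v+v=0$, you use a generic trace right-inverse), kill it near the origin with a cutoff so that the resulting source lies in $L^2(\Omega;w^{-1})$ and is unaffected by replacing $w$ with $w_k$ for large $k$, and then invoke Lemma \ref{08.15.L5} for the homogeneous Dirichlet problem before adding the lift back.
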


\begin{proof}
	We denote by $v\in H^2(\Omega)$ a solution of the following  equation
	\begin{equation*}
		\begin{cases} 
			-\Delta v+v=0, &\mbox{in }\Omega, \\
			v=g, &\mbox{on }\partial\Omega\,.  
		\end{cases}
	\end{equation*}
	Let $R_0>0$ with $B_{3R_0}\subseteq \Omega$  and take the cut-off function $\zeta\in C_0^\infty (\Omega)$ such that 
	\begin{equation*}
		0\leq \zeta\leq 1, \quad \zeta=0 \mbox{ on } B_{R_0},\quad \zeta=1 \mbox{ on }\Omega\setminus B_{2R_0}, \quad |\nabla \zeta|\leq  \frac{C}{R_0}, 
	\end{equation*}
	where $C>0$ is a generic constant. Set $v_0=\zeta v$. It is obvious that $\mathrm{div}(w\nabla v_0)\in L^2(\Omega; w)$ since $v_0=0$ on $B_{R_0}$, and $v_0=g$ on $\partial\Omega$. 
	
	Taking $\widetilde u_0=u_0-v_0$, then $\widetilde u_0\in H_{w,0}^1(\Omega)$ satisfies the following equation 
	\begin{equation*}
		\begin{cases}
			-\mathrm{div}(w\nabla \widetilde u_0)=\mathrm{div}(w \nabla v_0), &\mbox{in }\Omega, \\
			\widetilde u_0=0, &\mbox{on }\partial\Omega, 
		\end{cases}
	\end{equation*}
	By Lemma \ref{08.15.L5}, there exists a sequence $u_{k,0}\in H_{w_k,0}^1(\Omega)$ satisfies 
	\begin{equation*}
		\begin{cases} 
			-\mathrm{div}(w_k\nabla u_{k,0})=\mathrm{div}(w\nabla v_0), &\mbox{in }\Omega,\\
			u_{k,0}=0, &\mbox{on }\partial\Omega, 
		\end{cases}
	\end{equation*}
	and 
	\begin{equation*}
		\begin{split}
			u_{k,0}
			&\rightharpoonup \widetilde u_0 \mbox{ weakly in } H_{w, 0}^1(\Omega),\\
			u_{k,0}
			&\rightarrow \widetilde u_0 \mbox{ strongly in } L^2(\Omega). 
		\end{split}
	\end{equation*}
	Denote $u_k=u_{k,0}+v_0$.  Notice that $v_0=0$ on $B_{R_0}$ and $w=w_k$  for $k>  \frac{1}{R_0}$, then  $u_k$ is the solution of the following system 
	\begin{equation*}
		\begin{cases}
			-\mathrm{div}(w_k\nabla u_k)=0, &\mbox{in }\Omega, \\
			u_k=g, &\mbox{on }\partial\Omega, 
		\end{cases}
	\end{equation*}
	and 
	\begin{equation*}
		\begin{split} 
			u_{k}
			&\rightarrow u_0 \mbox{ strongly in } L^2(\Omega). 
		\end{split}
	\end{equation*}
	This complete the proof of Corollary \ref{08.19.C2}. 
\end{proof}

Before proving the three sphere inequality, we present some preliminary results. The proof of the degenerate three sphere inequality is more complex than that of the standard one, and the following corollary will play a crucial role in establishing the degenerate version.

\begin{corollary}\label{08.18.C1}
	Let $u_0, u_k\ (k\in\mathbb{N})$ be defined as in Lemma \ref{08.15.L5} or Corollary \ref{08.19.C2}. Then for any $\eta>0$ with $B_\eta\subseteq \Omega$, we have 
	\begin{equation}\label{co1}
		\int_{B_\eta}w_k u_k^2\mathrm d x\rightarrow \int_{B_\eta}w u_0^2\mathrm d x\mbox{ as } k\rightarrow\infty, 
	\end{equation}
	and 
	\begin{equation}\label{co2}
		R_k:= \frac{\alpha}{2k^2}\int_{B_ \frac{1}{k}}\left( \frac{3}{4}|y|^2+ \frac{1}{4k^2}\right)^{ \frac{\alpha}{2}-1}u_k^2\mathrm d y \stackrel{k\rightarrow\infty}{\longrightarrow} 0 . 
	\end{equation}
\end{corollary}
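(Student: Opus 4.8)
The plan is to deduce both limits from the strong convergence $u_k\to u_0$ in $L^2(\Omega)$ given by \eqref{08.15.12}, combined with two structural facts about the approximating weights recorded in Section \ref{S3}: $w_k$ coincides with $w$ on $\Omega\setminus B_{\frac1k}$ and satisfies $w\le w_k\le (\tfrac1k)^\alpha$ on $B_{\frac1k}$; and the family $\{u_k\}$ is bounded in $H_{w_k,0}^1(\Omega)$, uniformly in $k$, by Lemma \ref{08.15.L3} (in the setting of Lemma \ref{08.15.L5}) or through the decomposition $u_k=u_{k,0}+v_0$ of Corollary \ref{08.19.C2}. In particular $\{u_k\}$ is bounded in $L^2(\Omega)$, which I use throughout.

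For \eqref{co1} I would take $k$ large enough that $B_{\frac1k}\subseteq B_\eta$ and split
\begin{equation*}
	\int_{B_\eta} w_k u_k^2\,\mathrm d x-\int_{B_\eta} w u_0^2\,\mathrm d x=\int_{B_{\frac1k}}(w_k-w)u_k^2\,\mathrm d x+\int_{B_\eta} w\,(u_k^2-u_0^2)\,\mathrm d x,
\end{equation*}
using $w_k=w$ on $B_\eta\setminus B_{\frac1k}$. The first term is controlled by $0\le w_k-w\le(\tfrac1k)^\alpha$ on $B_{\frac1k}$ together with the $L^2$-boundedness of $u_k$, hence it is $O(k^{-\alpha})$ and vanishes. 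For the second term $w=|x|^\alpha\le\eta^\alpha$ on $B_\eta$, so writing $u_k^2-u_0^2=(u_k-u_0)(u_k+u_0)$ and applying Cauchy--Schwarz bounds it by $\eta^\alpha\|u_k-u_0\|_{L^2(\Omega)}\|u_k+u_0\|_{L^2(\Omega)}$, which tends to $0$ by \eqref{08.15.12}.

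The key observation for \eqref{co2} is that on $B_{\frac1k}$ the weight in $R_k$ is exactly a power of $w_k$: since $w_k=(\tfrac34|y|^2+\tfrac1{4k^2})^{\alpha/2}$ there, one has $(\tfrac34|y|^2+\tfrac1{4k^2})^{\frac{\alpha}{2}-1}=w_k^{1-\frac2\alpha}$, so that
\begin{equation*}
	R_k=\frac{\alpha}{2k^2}\int_{B_{\frac1k}}w_k^{1-\frac2\alpha}u_k^2\,\mathrm d y\le\frac{\alpha}{2k^2}\int_{\Omega}w_k^{1-\frac2\alpha}u_k^2\,\mathrm d y.
\end{equation*}
Squaring the Hardy inequality \eqref{08.20.1} of Lemma \ref{08.16.L2} gives $\int_\Omega w_k^{1-\frac2\alpha}u_k^2\,\mathrm d y\le\tfrac{4}{(N+\alpha-2)^2}\|u_k\|_{H_{w_k,0}^1(\Omega)}^2\le C$ uniformly in $k$, whence $R_k=O(k^{-2})\to 0$.

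I expect the only real obstacle to be justifying the uniform bound $\int_\Omega w_k^{1-\frac2\alpha}u_k^2\le C$, since \eqref{08.20.1} is valid only for functions with vanishing trace. In the setting of Lemma \ref{08.15.L5} this is immediate because $u_k\in H_{w_k,0}^1(\Omega)$ and Lemma \ref{08.15.L3} supplies the bound. In the Corollary \ref{08.19.C2} setting $u_k=u_{k,0}+v_0$ carries boundary data, but $v_0\equiv 0$ on $B_{R_0}\supseteq B_{\frac1k}$ for large $k$, so on $B_{\frac1k}$ one may replace $u_k$ by $u_{k,0}\in H_{w_k,0}^1(\Omega)$ and apply \eqref{08.20.1} to $u_{k,0}$; its $H_{w_k,0}^1$-norm is bounded uniformly because the source $\operatorname{div}(w\nabla v_0)$ is supported where $w_k=w$, so Lemma \ref{08.15.L3} applies with a $k$-independent right-hand side. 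This bookkeeping of where $v_0$ vanishes, together with the identification of the weight as $w_k^{1-2/\alpha}$, is the crux; the remaining estimates are routine.
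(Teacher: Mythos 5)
Your proposal is correct. The argument for \eqref{co1} is essentially the paper's: both split off the region $B_{\frac1k}$ where $w_k\neq w$, kill that piece using the uniform smallness $0\le w_k-w\le k^{-\alpha}$ on $B_{\frac1k}$ together with the $L^2$-boundedness of $\{u_k\}$, and handle the rest with the strong convergence \eqref{08.15.12} and the boundedness of $w$ on $B_\eta$. For \eqref{co2}, however, you take a genuinely different route. The paper first sandwiches $R_k$ between constant multiples of $\int_{B_{\frac1k}}w_ku_k^2\,\mathrm d x$ (using $\tfrac{1}{4k^2}\le\tfrac34|y|^2+\tfrac1{4k^2}\le\tfrac1{k^2}$), and then shows this quantity vanishes by comparing $u_k$ with $u_0$ via \eqref{08.15.12} and invoking the absolute continuity of the integral over the shrinking balls $B_{\frac1k}$; this is purely qualitative and uses only $L^2$ information. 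You instead recognize the kernel in $R_k$ as $w_k^{1-\frac2\alpha}$ on $B_{\frac1k}$, extend the integral to all of $\Omega$ by nonnegativity, and bound $\int_\Omega w_k^{1-\frac2\alpha}u_k^2\,\mathrm d x$ uniformly in $k$ by squaring the Hardy inequality \eqref{08.20.1} and using the uniform energy bound of Lemma \ref{08.15.L3}. This buys a quantitative rate $R_k=O(k^{-2})$, at the price of needing the $H_{w_k,0}^1$ bound rather than just $L^2$ convergence, and of the extra bookkeeping in the Corollary \ref{08.19.C2} setting where $u_k$ has nonzero trace --- a point you correctly identify and resolve by replacing $u_k$ with $u_{k,0}$ on $B_{\frac1k}$, where $v_0\equiv0$. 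Both proofs are sound; yours is sharper where it matters and slightly heavier in its prerequisites.
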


\begin{proof}
	Since $\eta > 0$, let $k \in \mathbb{N}$ be sufficiently large such that $ \frac{1}{k} <  \frac{1}{2}\eta$. Since $w = w_k$ in $B_\eta \setminus B_ \frac{1}{k}$, by Lemma \ref{08.15.L5} and the uniform continuity property, we see  that
	\begin{equation*}
		\begin{split}
			&\hspace{4.5mm}\left|\int_{B_\eta}w_ku_k^2\mathrm d x-\int_{B_\eta}wu_0^2\mathrm d x\right| \\
			&\leq \left|\int_{B_\eta}w_ku_k^2\mathrm d x-\int_{B_\eta}wu_k^2\mathrm d x\right|+\left|\int_{B_\eta}wu_k^2\mathrm d x-\int_{B_\eta}wu_0^2\mathrm d x\right|\\
			&\leq \max_{|x|\leq  \frac{1}{k}}\left[\left( \frac{3}{4}|x|^2+ \frac{1}{4k^2}\right)^{ \frac{\alpha}{2}}-|x|^\alpha\right]\|u_k\|_{L^2(B_\eta)}^2+\left|\int_{B_\eta}wu_k^2\mathrm d x-\int_{B_\eta}wu_0^2\mathrm d x\right|\\
			&\leq C \max_{|x|\leq  \frac{1}{k}}\left[\left( \frac{3}{4}|x|^2+ \frac{1}{4k^2}\right)^{ \frac{\alpha}{2}}-|x|^\alpha\right]+\left|\int_{B_\eta}wu_k^2\mathrm d x-\int_{B_\eta}wu_0^2\mathrm d x\right| \stackrel{k\rightarrow\infty}{\longrightarrow} 0 .  
		\end{split}
	\end{equation*}
	
	This proves the \eqref{co1}.   Now, we  are going to prove  \eqref{co2}. 
	
	Note that $ \frac{1}{4k^2}\leq  \frac{3}{4}|y|^2+ \frac{1}{4k^2}\leq  \frac{1}{k^2}$ in $B_ \frac{1}{k}$, we have 
	\begin{equation*}
		\frac{\alpha}{2}\int_{B_ \frac{1}{k}}w_ku_k^2\mathrm d x\leq R_k\leq 2\alpha\int_{B_ \frac{1}{k}}w_k u_k^2\mathrm d x, 
	\end{equation*}
	so, we only need to show that 
	\begin{equation*}
		\int_{B_ \frac{1}{k}}w_ku_k^2\mathrm d x \stackrel{k\rightarrow\infty}{\longrightarrow} 0 . 
	\end{equation*}
	Indeed, by \eqref{08.15.12}, we have 
	\begin{equation*}
		\begin{split}
			&\hspace{4.5mm}\int_{B_ \frac{1}{k}}w_ku_k^2\mathrm d x
			\leq \left|\int_{B_ \frac{1}{k}}w_ku_k^2\mathrm d x-\int_{B_ \frac{1}{k}}wu_0^2\mathrm d x\right|+\int_{B_ \frac{1}{k}}wu_0^2\mathrm d x\\
			&\leq \left|\int_{B_{ \frac{1}{k}}}(w_ku_k^2-w_ku_0^2)\mathrm d x\right|+\left|\int_{B_{ \frac{1}{k}}}(w_k-w)u_0^2\mathrm d x\right|+\int_{B_{ \frac{1}{k}}}wu_0^2\mathrm d x\\
			&\leq  \frac{1}{k^\alpha}\left|\int_{B_ \frac{1}{k}}u_k^2\mathrm d x-\int_{B_{ \frac{1}{k}}}u_0^2\mathrm d x\right|+2m^\alpha\int_{B_{ \frac{1}{k}}}u_0^2\mathrm d x+\int_{B_{ \frac{1}{k}}}wu_0^2\mathrm d x\stackrel{k\rightarrow\infty}{\longrightarrow} 0 . 
		\end{split}
	\end{equation*}
\end{proof}

\section{Three sphere  inequality}\label{S4}
We now proceed to prove the degenerate three sphere inequality, also using an approximation method. First, we introduce some notations.

Set  $0<\epsilon\ll r$, let
\begin{equation*}
	H(r)=\int_{B_r}w|v(y)|^2\mathrm d y,\quad D(r)=\int_{B_r}w |\nabla v(y)|^2(r^{2}-|y|^{2})\mathrm d y, 
\end{equation*}
and
\begin{equation*}
	H_\epsilon(r)=\int_{B_r}w_\epsilon|v(y)|^2\mathrm d y,\quad D_\epsilon(r)=\int_{B_r}w_\epsilon|\nabla v(y)|^2(r^{2}-|y|^{2})\mathrm d y, 
\end{equation*}
where $w_\epsilon$ defined in \eqref{we}. Let 
\begin{equation*}
	\Phi_\epsilon(r)=
	\begin{cases}
		\frac{D_\epsilon(r)}{H_\epsilon(r)}, & \mbox{if } H_\epsilon\neq 0, \\
		0, & \mbox{if } H_\epsilon=0. 
	\end{cases}
\end{equation*}

We first prove the following three sphere inequality for the uniformly elliptic operator with $\epsilon$,
which contains additional terms $ \frac{1}{2}\int_{r_1}^{r_2} \frac{r^\alpha R_\epsilon}{H_\epsilon(r)}\mathrm d r$ and $ \frac{1}{2}\int_{r_2}^{r_3} \frac{r^\alpha R_\epsilon}{H_\epsilon(r)}\mathrm d r$,
compared to the standard form of the three sphere inequality. 

\begin{lemma}\label{08.18.L1}
	Let  $R_0>0$ with $B_{2R_0}\subseteq \Omega$, and let $\epsilon\in (0,  \frac{1}{2}R_0)$ be small enough. Let $v$ be a solution of $ \mathrm{div}(w_\epsilon\nabla v)=0$ in $B_{R_0}$.  Then for any $0<2\epsilon<r_1<r_2<r_3<R_0$, we have 
	\begin{equation}\label{08.18.10}
		\begin{split}
		&\hspace{4.5mm}\frac{1}{r_1^{-\alpha}-r_2^{-\alpha}}\left(\log \frac{H_\epsilon(r_2)}{H_\epsilon(r_1)}+ \frac{1}{2}\int_{r_1}^{r_2} \frac{r^\alpha R_\epsilon}{H_\epsilon(r)}\mathrm d r\right)\\
		&\leq  \frac{1}{r_2^{-\alpha}-r_3^{-\alpha}}\left( \log \frac{H_\epsilon(r_3)}{H_\epsilon(r_2)}+ \frac{1}{2}\int_{r_2}^{r_3} \frac{r^\alpha R_\epsilon}{H_\epsilon(r)}\mathrm d r\right), 
				\end{split}
	\end{equation}
	where 
	\begin{equation}\label{08.18.2}
		R_\epsilon= \frac{\alpha}{2}\epsilon^2\int_{B_\epsilon}\left( \frac{3}{4}|y|^2+ \frac{1}{4}\epsilon^2\right)^{ \frac{\alpha}{2}-1}v^2\mathrm d y. 
	\end{equation}
\end{lemma}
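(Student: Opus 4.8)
The plan is to read \eqref{08.18.10} as a convexity/monotonicity statement in the variable $t=r^{-\alpha}$ and to establish it through a frequency-function argument tailored to the weight $w_\epsilon$. First I would record the two differentiation formulas
\[
H_\epsilon'(r)=\int_{\partial B_r}w_\epsilon v^2\,\mathrm dS=r^{\alpha}\int_{\partial B_r}v^2\,\mathrm dS,\qquad D_\epsilon'(r)=2r\int_{B_r}w_\epsilon|\nabla v|^2\,\mathrm dy,
\]
both valid for $r>\epsilon$: the first uses $w_\epsilon=r^\alpha$ on $\partial B_r$, and the second uses that the factor $r^2-|y|^2$ vanishes on $\partial B_r$. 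I also note that $R_\epsilon\ge 0$ and that $R_\epsilon$ is independent of $r$.

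The algebraic heart of the proof is a Rellich--Pohozaev identity. Starting from $\mathrm{div}(w_\epsilon\nabla v)=0$, integrating $\mathrm{div}\big((r^2-|y|^2)w_\epsilon v\nabla v\big)$ over $B_r$ gives $D_\epsilon(r)=\int_{B_r}w_\epsilon\,y\cdot\nabla(v^2)\,\mathrm dy$, and a further integration by parts of $\mathrm{div}(w_\epsilon v^2 y)$ yields
\[
D_\epsilon(r)=rH_\epsilon'(r)-(N+\alpha)H_\epsilon(r)+\tfrac12 R_\epsilon .
\]
The remainder $R_\epsilon$ enters precisely because $\mathrm{div}(w_\epsilon y)=(N+\alpha)w_\epsilon$ holds only on $\{|y|>\epsilon\}$, the defect $(N+\alpha)w_\epsilon-\mathrm{div}(w_\epsilon y)=\tfrac{\alpha}{4}\epsilon^2\big(\tfrac34|y|^2+\tfrac14\epsilon^2\big)^{\frac\alpha2-1}$ being supported in $B_\epsilon$. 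Rearranging, I obtain the bridge relation $\dfrac{H_\epsilon'(r)}{H_\epsilon(r)}=\dfrac{\Phi_\epsilon(r)+N+\alpha}{r}-\dfrac{R_\epsilon}{2rH_\epsilon(r)}$, linking the left side of \eqref{08.18.10} to the frequency $\Phi_\epsilon$.

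Next I would reduce \eqref{08.18.10} to a monotonicity statement. Because $r_i^{-\alpha}-r_j^{-\alpha}=\alpha\int_{r_i}^{r_j}r^{-\alpha-1}\,\mathrm dr$, each side of \eqref{08.18.10} is, up to the constant $\alpha$, the $r^{-\alpha-1}\,\mathrm dr$-weighted average over $[r_1,r_2]$, respectively $[r_2,r_3]$, of
\[
\phi(r):=r^{\alpha+1}\left(\frac{H_\epsilon'(r)}{H_\epsilon(r)}+\frac{r^{\alpha}R_\epsilon}{2H_\epsilon(r)}\right).
\]
Since the two intervals are adjacent and ordered, the comparison of these averages, hence \eqref{08.18.10}, follows at once as soon as $\phi$ is non-decreasing. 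Substituting the bridge relation, $\phi$ becomes $r^{\alpha}(\Phi_\epsilon+N+\alpha)$ plus an explicit term built solely from the $B_\epsilon$-remainder $R_\epsilon$; as $r^\alpha\ge0$ and $\Phi_\epsilon+N+\alpha\ge0$, the monotonicity of $\phi$ is governed by that of the frequency $\Phi_\epsilon$ together with the sign of the remainder.

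The main obstacle is the monotonicity of $\Phi_\epsilon$, which I would obtain by the classical frequency computation: differentiating $\Phi_\epsilon=D_\epsilon/H_\epsilon$, inserting $D_\epsilon'=2r\int_{B_r}w_\epsilon|\nabla v|^2\,\mathrm dy=2r\int_{\partial B_r}w_\epsilon v\,\partial_\nu v\,\mathrm dS$ (the last equality from testing the equation against $v$) together with $H_\epsilon'(r)=\int_{\partial B_r}w_\epsilon v^2\,\mathrm dS$, and then invoking the Cauchy--Schwarz inequality on each sphere,
\[
\left(\int_{\partial B_r}w_\epsilon v\,\partial_\nu v\,\mathrm dS\right)^{2}\le\left(\int_{\partial B_r}w_\epsilon v^2\,\mathrm dS\right)\left(\int_{\partial B_r}w_\epsilon(\partial_\nu v)^2\,\mathrm dS\right),
\]
to fix the sign of $\Phi_\epsilon'$. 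The point where the degenerate case genuinely differs from the standard one is that $w_\epsilon$ is only Lipschitz and is not exactly $\alpha$-homogeneous inside $B_\epsilon$, so the computation produces extra interior terms that must be matched against $R_\epsilon$ and controlled; here $R_\epsilon\ge0$ and the elementary bound $\tfrac34|y|^2+\tfrac14\epsilon^2\ge|y|^2$ on $B_\epsilon$ are exactly what keep these corrections of the right sign. Assembling the monotonicity of $\phi$ with the weighted-average reduction then delivers \eqref{08.18.10}.
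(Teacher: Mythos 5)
Your overall architecture coincides with the paper's: the identity $D_\epsilon(r)=rH_\epsilon'(r)-(N+\alpha)H_\epsilon(r)+\tfrac12R_\epsilon$ is exactly the paper's \eqref{08.18.4}, and your reduction of \eqref{08.18.10} to the monotonicity of a rescaled frequency, via averages against $r^{-\alpha-1}\,\mathrm dr$ over the adjacent intervals, is the same endpoint comparison the paper carries out in its final step. The genuine gap is in the step you defer to ``the classical frequency computation.'' Here $H_\epsilon$ and $D_\epsilon$ are \emph{solid} integrals (with the weight $r^2-|y|^2$ in $D_\epsilon$), so $D_\epsilon'H_\epsilon-D_\epsilon H_\epsilon'$ mixes surface integrals with solid ones, and the spherical Cauchy--Schwarz inequality between $\int_{\partial B_r}w_\epsilon v\,\partial_\nu v$ and $\int_{\partial B_r}w_\epsilon v^2$ that you invoke does not yield a sign for this combination. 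What is actually required is a second Rellich-type identity, obtained by integrating $\mathrm{div}\bigl[w_\epsilon|\nabla v|^2(r^2-|y|^2)y\bigr]$ and using the multiplier $y\cdot\nabla v$, namely
\begin{equation*}
D_\epsilon'(r)=\frac{N+\alpha}{r}D_\epsilon(r)+\frac{4}{r}\int_{B_r}w_\epsilon\,(y\cdot\nabla v)^2\,\mathrm dy-\frac{1}{r}\widetilde R_\epsilon,\qquad \widetilde R_\epsilon\ge 0,
\end{equation*}
combined with the representation $D_\epsilon=2\int_{B_r}w_\epsilon v\,(y\cdot\nabla v)\,\mathrm dy$ and the Cauchy--Schwarz inequality \emph{on the ball} between $w_\epsilon^{1/2}v$ and $w_\epsilon^{1/2}(y\cdot\nabla v)$. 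This gives $H_\epsilon^2\Phi_\epsilon'\ge-\tfrac1r\widetilde R_\epsilon H_\epsilon$, and the bound $\widetilde R_\epsilon\le\alpha D_\epsilon(r)$ (from $\tfrac34|y|^2+\tfrac14\epsilon^2\ge\tfrac14\epsilon^2$) then yields $\Phi_\epsilon'\ge-\tfrac\alpha r\Phi_\epsilon$, i.e.\ monotonicity of $r^\alpha\Phi_\epsilon$ --- not of $\Phi_\epsilon$ itself, which need not be monotone here. Since this computation is the entire content of the lemma beyond the standard uniformly elliptic case, gesturing at ``extra interior terms to be controlled'' does not suffice.

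A second, smaller issue: you take the integrand $\frac{r^\alpha R_\epsilon}{H_\epsilon(r)}$ from the displayed statement literally, so after substituting the bridge relation your $\phi$ retains the residual term $\frac{R_\epsilon}{2H_\epsilon(r)}\bigl(r^{2\alpha+1}-r^{\alpha}\bigr)$, whose monotonicity is not clear and which you do not address. In the paper's own derivation the correction carried to each side is $\tfrac12\int\frac{R_\epsilon}{rH_\epsilon(r)}\,\mathrm dr$ (equivalently $r^{-1-\alpha}\cdot\frac{r^\alpha R_\epsilon}{H_\epsilon}$ under the integral), for which the residual cancels exactly and $\phi=r^\alpha\bigl(N+\alpha+\Phi_\epsilon(r)\bigr)$; this is also the form used subsequently in the proof of Theorem \ref{08.19.P1}. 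You should work with that version, or else supply an argument for the extra term.
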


\begin{proof} 
	It is obvious that $v\in H^2(B_{R_0})$ since $( \frac{\epsilon}{2})^\alpha\leq w_\epsilon$ on $\Omega$. i.e., $-\mathrm{div}(w_\epsilon\nabla v)=0$ in $B_{R_0}$ is indeed a uniformly elliptic equation. 
	We divide our proof into the following steps.

	\noindent {\it Step 1}. We compute $H_\epsilon'(r)$ and $D_\epsilon(r)$.   
	It is clear  that 
	\begin{equation}\label{08.17.3}
		H_\epsilon'(r)=\int_{\partial B_r}w_\epsilon|v(y)|^2\mathrm d\sigma(y). 
	\end{equation}  
	By 
	\begin{equation*}
		\int_{B_r}\mathrm{div}\big(w_\epsilon(\nabla v)v (r^{2}-|y|^{2})\big)\mathrm d y=\int_{\partial B_r}w_\epsilon(\nabla v\cdot \nu)v(r^{2}-|y|^{2})\mathrm d\sigma(y)=0, 
	\end{equation*}
	and 
	\begin{equation*}
		\begin{split}
			&\hspace{4.5mm}\int_{B_r}\mathrm{div}\big(w_\epsilon(\nabla v)v(r^{2}-|y|^{2})\big)\mathrm d y\\
			&=\int_{B_r}\mathrm{div}(w_\epsilon\nabla v)v(r^{2}-|y|^{2})\mathrm d y+\int_{B_r}w_\epsilon|\nabla v|^2(r^{2}-|y|^{2})\mathrm d y\\
			&\hspace{4.5mm}-2\int_{B_r} w_\epsilon v\nabla v\cdot y\mathrm d y  
		\end{split}
	\end{equation*}
	we get
	\begin{equation}\label{08.17.4}
		D_\epsilon(r)=2\int_{B_r}w_\epsilon v\nabla v\cdot y\mathrm d y
	\end{equation}
	by $ \mathrm{div}(w_\epsilon\nabla v)=0$. 
	
	\noindent {\it Step 2}. We compute $ \frac{H_\epsilon'(r)}{H_\epsilon(r)}$.   
	Set 
	\begin{equation*}
		G(y)=r^2-|y|^2, 
	\end{equation*}
	then 
	\begin{equation*}
		G|_{\partial B_r}=0, \quad \nabla G=-2y,\quad  \frac{\partial G}{\partial \nu}\bigg|_{\partial B_r}=-2r. 
	\end{equation*}
	
	We compute
	\begin{equation*}
		\begin{split}
			&\hspace{4.5mm}\int_{B_r}\mathrm{div}(w_\epsilon\nabla v^2)G\mathrm d y\\
			&=\int_{B_r}\mathrm{div}(w_\epsilon G\nabla v^2)\mathrm d y-\int_{B_r}w_\epsilon\nabla v^2\cdot \nabla G\mathrm d y\\
			&=\int_{\partial B_r}w_\epsilon G\nabla v^2\cdot \nu\mathrm d \sigma(y)-\int_{B_r}w_\epsilon\nabla v^2\cdot \nabla G\mathrm d y\\
			&=-\int_{B_r}w_\epsilon\nabla G\cdot \nabla v^2\mathrm d y=-\int_{B_r}\mathrm{div}(w_\epsilon v^2\nabla G)\mathrm d y+\int_{B_r}v^2\mathrm{div}(w_\epsilon\nabla G)\mathrm d y\\
			&=2r\int_{\partial B_r}w_\epsilon v^2\mathrm d \sigma(y)-2(N+\alpha)\int_{B_r}w_\epsilon v^2\mathrm d y\\
			&\hspace{4.5mm}+ \frac{\alpha}{2}\epsilon^2\int_{B_\epsilon}\left( \frac{3}{4}|y|^2+ \frac{1}{4}\epsilon^2\right)^{ \frac{\alpha}{2}-1}v^2\mathrm d y. 
		\end{split}
	\end{equation*}
	
	Since
	\begin{equation*}
		\begin{split}
		&\hspace{4.5mm}\int_{B_r}\mathrm{div}(w_\epsilon\nabla v^2)G\mathrm d y
			=2\int_{B_r}\mathrm{div}(w_\epsilon v\nabla v)G\mathrm d x\\
			&=2\int_{B_r}w_\epsilon|\nabla v|^2G\mathrm d x+2\int_{B_r}v\mathrm{div}(w_\epsilon\nabla v)G\mathrm d x=2D_\epsilon(r) 
		\end{split}
	\end{equation*}
	by \eqref{08.17.3}, we have 
	\begin{equation}\label{08.18.4}
		\begin{split}
			H_\epsilon'(r)= \frac{N+\alpha}{r}H_\epsilon(r)+ \frac{1}{r}D_\epsilon(r)- \frac{1}{2r}R_\epsilon, 
		\end{split}
	\end{equation}
	where 
	\begin{equation*}
		R_\epsilon= \frac{\alpha}{2}\epsilon^2\int_{B_\epsilon}\left( \frac{3}{4}|y|^2+ \frac{1}{4}\epsilon^2\right)^{ \frac{\alpha}{2}-1}v^2\mathrm d y. 
	\end{equation*}
	This implies 
	\begin{equation}\label{08.21.1}
		\frac{H_\epsilon'(r)}{H_\epsilon(r)}= \frac{N+\alpha}{r}+ \frac{1}{r} \frac{D_\epsilon(r)}{H_\epsilon(r)}- \frac{1}{2r} \frac{R_\epsilon}{H_\epsilon(r)}. 
	\end{equation}
	
	\noindent  {\it Step 3}. We compute $D_\epsilon'(r)$.  
	Now, 
	\begin{equation}\label{08.18.1}
		\begin{split}
			D_\epsilon'(r)=2r\int_{B_r}w_\epsilon|\nabla v(y)|^2\mathrm d y. 
		\end{split}
	\end{equation}
	
	On one hand, we have 
	\begin{equation*}
		\begin{split}
			&\hspace{4.5mm}\int_{B_r}\mathrm{div}\big[w_\epsilon|\nabla v(y)|^2(r^2-|y|^2)y\big]\mathrm d y\\
			&=\int_{\partial B_r}w_\epsilon|\nabla v(y)|^2\left(r^2-|y|^2\right)y\cdot\nu\mathrm d \sigma(y)=0. 
		\end{split}
	\end{equation*}
	On the other hand, we have 
	\begin{equation*}
		\begin{split}
			&\hspace{4.5mm}\int_{B_r}\mathrm{div}\big[w_\epsilon|\nabla v(y)|^2(r^2-|y|^2)y\big]\mathrm d y\\
			&=(N+\alpha)\int_{B_r}w_\epsilon|\nabla v(y)|^2(r^2-|y|^2)\mathrm d y\\
			&\hspace{4.5mm}- \frac{\alpha}{4}\epsilon^2\int_{B_\epsilon}\left( \frac{3}{4}|y|^2+ \frac{1}{4}\epsilon^2\right)^{ \frac{\alpha}{2}-1}|\nabla v(y)|^2(r^2-|y|^2)\mathrm d y\\
			&\hspace{4.5mm}-2\int_{B_r} w_\epsilon|y|^2|\nabla v(y)|^2\mathrm d y+\int_{B_r}w_\epsilon(y\cdot \nabla |\nabla v(y)|^2)(r^2-|y|^2)\mathrm d y.
		\end{split}
	\end{equation*}
	Now, by the equation $\nabla v\cdot \nabla (y\cdot \nabla v)=|\nabla v|^2+ \frac{1}{2}y\cdot \nabla |\nabla v|^2$ and $\mathrm{div}(w_\epsilon\nabla v)=0$ and $ \mathrm{div}(w_\epsilon\nabla v)=0$, we get
	\begin{equation*}
		\begin{split}
			&\hspace{4.5mm}\int_{B_r}w_\epsilon y\cdot \nabla |\nabla v(y)|^2(r^2-|y|^2)\mathrm d y\\
			&=-2\int_{B_r}w_\epsilon|\nabla v|^2(r^2-|y|^2)\mathrm d y+2\int_{B_r}w_\epsilon\nabla v\cdot \nabla (y\cdot \nabla v)(r^2-|y|^2)\mathrm d y\\
			&=-2D_\epsilon(r)+2\int_{B_r}\mathrm{div}\left[w_\epsilon(r^2-|y|^2)(y\cdot \nabla v)\nabla v\right]\mathrm d x\\
			&\hspace{4.5mm}-2\int_{B_r}(y\cdot \nabla v)\mathrm{div}\left[w_\epsilon(r^2-|y|^2)\nabla v\right]\mathrm d y\\
			&=-2D_\epsilon(r)-2\int_{B_r}(y\cdot \nabla v)\mathrm{div}(w_\epsilon\nabla v)(r^2-|y|^2)\mathrm d y+4\int_{B_r}w_\epsilon(y\cdot \nabla v)^2\mathrm d y\\
			&=-2D_\epsilon(r)+4\int_{B_r}w_\epsilon(y\cdot \nabla v)^2\mathrm d y. 
		\end{split}
	\end{equation*}
	Then 
	\begin{equation}\label{08.18.11}
		\begin{split}
			(N+\alpha-2)D_\epsilon(r)-2\int_{B_r}w_\epsilon|y|^2|\nabla v(y)|^2\mathrm d y+4\int_{B_r}w_\epsilon(y\cdot \nabla v)^2\mathrm d y-\widetilde R_\epsilon=0, 
		\end{split}
	\end{equation}
	where 
	\begin{equation*}
		\widetilde R_\epsilon= \frac{\alpha}{4}\epsilon^2\int_{B_\epsilon}\left( \frac{3}{4}|y|^2+ \frac{1}{4}\epsilon^2\right)^{ \frac{\alpha}{2}-1}|\nabla v(y)|^2(r^2-|y|^2)\mathrm d y. 
	\end{equation*}
	
	Since 
	\begin{equation*}
		D_\epsilon(r)=r^2\int_{B_r}w_\epsilon|\nabla v|^2\mathrm d y-\int_{B_r}w_\epsilon|y|^2|\nabla v|^2\mathrm d y, 
	\end{equation*}
	we obtain that 
	\begin{equation*}
		-\int_{B_r}w_\epsilon|y|^2|\nabla v|^2\mathrm d y=D_\epsilon(r)-r^2\int_{B_r}w_\epsilon|\nabla v|^2\mathrm d y, 
	\end{equation*}
	and 
	\begin{equation*}
		(N+\alpha)D_\epsilon(r)=2r^2\int_{B_r}w_\epsilon|\nabla v|^2\mathrm d y-4\int_{B_r}w_\epsilon(y\cdot \nabla v)^2\mathrm d y+\widetilde R_\epsilon. 
	\end{equation*}
	Hence, 
	\begin{equation}\label{08.18.3}
		D_\epsilon'(r)= \frac{N+\alpha}{r}D_\epsilon(r)+ \frac{4}{r}\int_{B_r}w_\epsilon(y\cdot \nabla v)^2\mathrm d y- \frac{1}{r}\widetilde R_\epsilon. 
	\end{equation}
	
	\noindent {\it Step 4}. We prove $r^\alpha\Phi_\epsilon(r)$ is a nondecreasing function for $r>0$.  
	Note that $R_\epsilon\geq 0, \widetilde R_\epsilon\geq 0$, we obtain 
	\begin{equation*}
		\begin{split} 
			&\hspace{4.5mm}H_\epsilon^2(r)\Phi_\epsilon'(r)
			=D_\epsilon'(r)H_\epsilon(r)-D_\epsilon(r)H_\epsilon'(r)\\
			&= \frac{4}{r}\left[\left(\int_{B_r}w_\epsilon(y\cdot \nabla v)^2\mathrm d x\right)\left(\int_{B_r}w_\epsilon v^2\mathrm d x\right)-\left(\int_{B_r}w_\epsilon vy\cdot \nabla v\mathrm d x\right)^2\right]\\
			&\hspace{4.5mm}- \frac{1}{r}\widetilde R_\epsilon H_\epsilon(r)+ \frac{1}{2r}D_\epsilon(r)R_\epsilon\\
			&\geq - \frac{1}{r}\widetilde R_\epsilon H_\epsilon(r). 
		\end{split}
	\end{equation*}
	Since $ \frac{\epsilon^2}{4}\leq  \frac{3}{4}|y|^2+ \frac{1}{4}\epsilon^2\leq \epsilon^2$, we have 
	\begin{equation*}
		\begin{split}
			\widetilde R_\epsilon
			&= \frac{\alpha}{4}\epsilon^2\int_{B_\epsilon}\left( \frac{3}{4}|y|^2+ \frac{1}{4}\epsilon^2\right)^{ \frac{\alpha}{2}-1}|\nabla v(y)|^2(r^2-|y|^2)\mathrm d y\\
			&\leq \alpha\int_{B_\epsilon}\left( \frac{3}{4}|y|^2+ \frac{1}{4}\epsilon^2\right)^{ \frac{\alpha}{2}}|\nabla v(y)|^2(r^2-|y|^2)\mathrm d y\leq \alpha D_\epsilon(r), 
		\end{split}
	\end{equation*}
	which implies that 
	\begin{equation*}
		\Phi_\epsilon'(r)\geq - \frac{\alpha}{r}\Phi_\epsilon(r). 
	\end{equation*}
	Hence 
	\begin{equation}\label{08.18.7}
		r^\alpha \Phi_\epsilon(r) \mbox{ is a nondecreasing function for $r>0$}. 
	\end{equation}
	
	\noindent {\it Step 5}.  Conclusion of the proof.  
	Again, by \eqref{08.21.1}, we have 
	\begin{equation*}
		\frac{\mathrm d}{\mathrm d t}\log H_\epsilon(r)= \frac{H_\epsilon'(r)}{H_\epsilon(r)}= \frac{1}{r}\left((N+\alpha)+\Phi(r)- \frac{1}{2} \frac{R_\epsilon}{H_\epsilon(r)}\right)\,. 
	\end{equation*}
	Then, for $0<r_1<r_2$, we have 
	\begin{equation*}
		\begin{split} 
			\log \frac{H_\epsilon(r_2)}{H_\epsilon(r_1)}
			&= \int_{r_1}^{r_2} \frac{1}{r}\left((N+\alpha)+\Phi_\epsilon(r)- \frac{1}{2} \frac{R_\epsilon}{H_\epsilon(r)}\right)\mathrm d r\\
			&=\int_{r_1}^{r_2} \frac{1}{r^{1+\alpha}}\left(r^\alpha(N+\alpha)+r^\alpha\Phi_\epsilon(r)- \frac{1}{2} \frac{r^\alpha R_\epsilon}{H_\epsilon(r)}\right)\mathrm d r, 
		\end{split}
	\end{equation*}
	and hence,  by \eqref{08.21.1}, we obtain 
	\begin{equation}\label{08.18.5}
		\begin{split}
			\log \frac{H_\epsilon(r_2)}{H_\epsilon(r_1)}+ \frac{1}{2}\int_{r_1}^{r_2} \frac{ R_\epsilon}{rH_\epsilon(r)}\mathrm d r
			&\leq \alpha^{-1}(r_1^{-\alpha}-r_2^{-\alpha})\big(r_2^\alpha (N+\alpha)+r_2^\alpha \Phi_\epsilon(r_2)\big). 
		\end{split}
	\end{equation}
	We note that the integral $\int_{r_1}^{r_2} \frac{ R_\epsilon}{rH_\epsilon(r)}\mathrm d x$ is meaningful since $R_\epsilon\leq 2\alpha H_\epsilon(r)$ for all $r>0$ and all $\epsilon>0$. 
	Now, for $r_2<r_3<R_0$, we have 
	\begin{equation*}
		\begin{split}
			\log \frac{H_\epsilon(r_3)}{H_\epsilon(r_2)}
			&=\int_{r_2}^{r_3} \frac{1}{r^{1+\alpha}}\left(r^\alpha(N+\alpha)+r^\alpha\Phi_\epsilon(r)- \frac{1}{2} \frac{r^\alpha R_\epsilon}{H_\epsilon(r)}\right)\mathrm d r, 
		\end{split}
	\end{equation*}
	and hence, by \eqref{08.18.7}, we obtain 
	\begin{equation}\label{08.18.6}
		\begin{split}
			\log \frac{H_\epsilon(r_3)}{H_\epsilon(r_2)}+ \frac{1}{2}\int_{r_2}^{r_3} \frac{ R_\epsilon}{rH_\epsilon(r)}\mathrm d r\geq \alpha^{-1}(r_2^{-\alpha}-r_3^{-\alpha})\big(r_2^\alpha (N+\alpha)+r_2^\alpha\Phi_\epsilon(r_2)\big). 
		\end{split}
	\end{equation}
	Combining \eqref{08.18.5} and \eqref{08.18.6},  for all $\epsilon>0$, we get 
	\begin{equation*}
		\begin{split}
		&\hspace{4.5mm}\frac{1}{r_1^{-\alpha}-r_2^{-\alpha}}\left(\log \frac{H_\epsilon(r_2)}{H_\epsilon(r_1)}+ \frac{1}{2}\int_{r_1}^{r_2} \frac{R_\epsilon}{rH_\epsilon(r)}\mathrm d r\right)\\
		&\leq  \frac{1}{r_2^{-\alpha}-r_3^{-\alpha}}\left( \log \frac{H_\epsilon(r_3)}{H_\epsilon(r_2)}+ \frac{1}{2}\int_{r_2}^{r_3} \frac{ R_\epsilon}{rH_\epsilon(r)}\mathrm d r\right). 
				\end{split}
	\end{equation*}
	This complete the proof of the lemma. 
\end{proof}

By a limiting argument, we obtain the following degenerate three sphere inequality.

\begin{theorem}\label{08.19.P1}
	Let $R_0>0$ with $B_{2R_0}\subseteq \Omega$. Let $u_0$ be a solution of $-\mathrm{div}(w\nabla u_0)=0$ in $B_{R_0}$. Then, for any  $0<r_1<r_2<r_3<R_0$, we have
	\begin{equation}\label{H1}
		\frac{1}{r_1^{-\alpha}-r_2^{-\alpha}}\log \frac{H(r_2)}{H(r_1)}\leq  \frac{1}{r_2^{-\alpha}-r_3^{-\alpha}} \log \frac{H(r_3)}{H(r_2)}. 
	\end{equation}
	Moreover, there exists $\mu\in (0,1)$, such that  
	\begin{equation*}\label{H2}
		H(r_2)\leq (H(r_1))^\mu(H(r_3))^{1-\mu}. 
	\end{equation*}
\end{theorem}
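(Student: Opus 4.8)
The plan is to obtain \eqref{H1} from the approximate three sphere inequality \eqref{08.18.10} of Lemma \ref{08.18.L1} by sending the regularization parameter to zero, and then to read off the geometric-mean form as a standard consequence of log-convexity. Concretely, I would take $v = u_k$ to be the solutions of the non-degenerate equations $\mathrm{div}(w_k\nabla u_k)=0$ associated to $u_0$ as in Corollary \ref{08.19.C2}, so that $u_k\to u_0$ strongly in $L^2(\Omega)$, and apply Lemma \ref{08.18.L1} with $\epsilon = 1/k$. This yields, for every large $k$ and any fixed $0<r_1<r_2<r_3<R_0$, the inequality \eqref{08.18.10} with $H_\epsilon = H_{1/k}$ and correction term $R_\epsilon = R_k$.

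The heart of the argument is the passage to the limit $k\to\infty$. For the principal terms I would invoke \eqref{co1} of Corollary \ref{08.18.C1}, which (taking the radius equal to each $r_i$) gives $H_{1/k}(r_i)=\int_{B_{r_i}}w_k u_k^2 \to \int_{B_{r_i}}w u_0^2 = H(r_i)$; assuming $H(r_1)>0$ (otherwise $u_0$ vanishes on $B_{r_1}$ and the statement is trivial) and using that each $H_{1/k}$ is nondecreasing in $r$, this forces $\log\frac{H_{1/k}(r_2)}{H_{1/k}(r_1)}\to\log\frac{H(r_2)}{H(r_1)}$ and likewise for the outer ratio. For the two correction integrals, the key observation is that $R_k$ is independent of $r$, so that $\tfrac12\int_{r_1}^{r_2}\tfrac{r^\alpha R_k}{H_{1/k}(r)}\,\mathrm dr = \tfrac{R_k}{2}\int_{r_1}^{r_2}\tfrac{r^\alpha}{H_{1/k}(r)}\,\mathrm dr$; since $H_{1/k}(r)\ge H_{1/k}(r_1)\to H(r_1)>0$, the integral factor stays bounded, while \eqref{co2} gives $R_k\to 0$, so both correction terms vanish in the limit. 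Because inequalities are preserved under limits, taking $k\to\infty$ in \eqref{08.18.10} then produces exactly \eqref{H1}.

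The geometric-mean inequality follows from \eqref{H1} by a purely algebraic manipulation. Writing $s_i = -r_i^{-\alpha}$, which is increasing in $r$, one has $r_i^{-\alpha}-r_{i+1}^{-\alpha}=s_{i+1}-s_i>0$, so \eqref{H1} says precisely that the difference quotients of $\log H$ in the variable $s$ are nondecreasing, i.e. $s\mapsto \log H$ is convex. Setting $\mu = \frac{r_2^{-\alpha}-r_3^{-\alpha}}{r_1^{-\alpha}-r_3^{-\alpha}} \in (0,1)$, one checks $s_2 = \mu s_1 + (1-\mu)s_3$, and clearing denominators in \eqref{H1} gives $\log H(r_2)\le \mu\log H(r_1)+(1-\mu)\log H(r_3)$, which exponentiates to $H(r_2)\le (H(r_1))^\mu(H(r_3))^{1-\mu}$.

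I expect the main obstacle to be controlling the two correction integrals uniformly in $k$ while simultaneously passing the principal logarithmic ratios to their limits; both steps hinge on the strict positivity $H(r_1)>0$ together with the convergences \eqref{co1}--\eqref{co2}, which is exactly the reason Corollary \ref{08.18.C1} was established beforehand. A secondary point needing care is the borderline case $H(r_1)=0$: since $w=|x|^\alpha>0$ almost everywhere, this means $u_0$ vanishes on $B_{r_1}$, and one should either dispose of it directly via the monotonicity of $H$ or simply restrict attention to nontrivial $u_0$, for which $H>0$ on $(0,R_0)$.
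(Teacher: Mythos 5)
Your proposal is correct and follows essentially the same route as the paper: approximate $u_0$ by the solutions $u_k$ from Corollary \ref{08.19.C2}, apply Lemma \ref{08.18.L1} with $\epsilon=1/k$, pass to the limit using the convergences \eqref{co1}--\eqref{co2} of Corollary \ref{08.18.C1}, and then obtain the geometric-mean form by the convexity algebra. Your justification that the correction integrals vanish (bounding $\int_{r_1}^{r_2} r^{\alpha}/H_{1/k}(r)\,\mathrm dr$ via monotonicity of $H_{1/k}$ and positivity of $H(r_1)$) is in fact slightly more explicit than the paper's, and your formula $\mu=\frac{r_2^{-\alpha}-r_3^{-\alpha}}{r_1^{-\alpha}-r_3^{-\alpha}}$ is the correct exponent for $H(r_1)$ (the paper's displayed $\mu$ and $1-\mu$ are interchanged).
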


\begin{proof}
	Without loss of generality, we assume that $u_0\neq 0$ in $B_{R_0}$. By the standard regularity enhancement method for elliptic equations in \cite{Evans}, we have $u_0\in H^2(B_{R_0}\backslash B_{\frac{R_0}{2}})$ and $u_0\in H^{\frac{3}{2}}(\partial B_{R_0})$, and by Corollary \ref{08.19.C2}, there exists $u_k\in H_{w_k}^1(B_{R_0})\ (k\in\mathbb{N})$  satisfying 
	\begin{equation*}
		\begin{cases}
			-\mathrm{div}(w_k\nabla u_k)=0, &\mbox{in } B_{R_0}, \\
			u_k=u_0, &\mbox{on }\partial B_{R_0}, 
		\end{cases}
	\end{equation*}
	and 
	\begin{equation}\label{08.19.3}
		u_k\rightarrow u_0 \mbox{ strongly in }L^2(B_{R_0}). 
	\end{equation}
	
	By Lemma \ref{08.18.L1}, we get \eqref{08.18.10}. Replacing $\epsilon$ by $\epsilon= \frac{1}{k}$,   we see 
	\begin{equation*}
		\begin{split}
		&\hspace{4.5mm}\frac{1}{r_1^{-\alpha}-r_2^{-\alpha}}\left(\log \frac{H_k(r_2)}{H_k(r_1)}+ \frac{1}{2}\int_{r_1}^{r_2} \frac{ R_k}{rH_k(r)}\mathrm d r\right)\\
		&\leq  \frac{1}{r_2^{-\alpha}-r_3^{-\alpha}}\left( \log \frac{H_k(r_3)}{H_k(r_2)}+ \frac{1}{2}\int_{r_2}^{r_3} \frac{ R_k}{rH_k(r)}\mathrm d r\right), 
				\end{split}
	\end{equation*}
	where 
	\begin{equation*}
		H_k(r)=\int_{B_r}w_k u_k^2\mathrm d x,\quad R_k= \frac{\alpha}{2k^2}\int_{B_ \frac{1}{k}}\left( \frac{3}{4}|y|^2+ \frac{1}{4k^2}\right)^{ \frac{\alpha}{2}-1}u_k^2\mathrm d y. 
	\end{equation*}
	From \eqref{08.19.3} and Corollary \ref{08.18.C1}, letting $k\rightarrow\infty$, we get 
	\begin{equation*}
		\frac{1}{r_1^{-\alpha}-r_2^{-\alpha}}\log \frac{H(r_2)}{H(r_1)}\leq  \frac{1}{r_2^{-\alpha}-r_3^{-\alpha}} \log \frac{H(r_3)}{H(r_2)}. 
	\end{equation*}
	This proves \eqref{H1}. 
	
	Finally, taking 
	\begin{equation*}
		\mu= \frac{r_1^{-\alpha}-r_2^{-\alpha}}{r_1^{-\alpha}-r_3^{-\alpha}}=\left( \frac{r_3}{r_2}\right)^\alpha  \frac{\left( \frac{r_2}{r_1}\right)^\alpha-1}{\left( \frac{r_3}{r_1}\right)^\alpha-1},  1-\mu= \frac{r_2^{-\alpha}-r_3^{-\alpha}}{r_1^{-\alpha}-r_3^{-\alpha}}=\left( \frac{r_1}{r_2}\right)^\alpha  \frac{1-\left( \frac{r_2}{r_3}\right)^\alpha}{1-\left( \frac{r_1}{r_3}\right)^\alpha}, 
	\end{equation*}
	yields 
	\begin{equation*}
		H(r_2)\leq (H(r_1))^\mu (H(r_3))^{1-\mu}. 
	\end{equation*}
	This complete the proof of theorem. 
\end{proof}
Below, we present the most standard and commonly used form of the degenerate three sphere inequality.
\begin{corollary}\label{08.19.C1}
	Assume the conditions in Theorem \ref{08.19.P1} hold. Then 
	\begin{equation*}
		\int_{B_r}v^2w\mathrm d x\leq \left(\int_{B_ \frac{r}{2}}v^2w\mathrm d x\right)^\mu\left(\int_{B_{2r}}v^2w\mathrm d y\right)^{1-\mu}
	\end{equation*}
	for $0< r< \frac{R_0}{2}$ with $\mu=\frac{4^\alpha-2^\alpha}{4^\alpha-1}$.
\end{corollary}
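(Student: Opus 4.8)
The plan is to obtain this corollary as a direct specialization of Theorem \ref{08.19.P1} to the geometrically-spaced triple of radii $r_1 = r/2$, $r_2 = r$, $r_3 = 2r$. First I would record that, in the notation fixed at the beginning of Section \ref{S4}, one has $\int_{B_\rho} v^2 w\,\mathrm dx = H(\rho)$, where $v$ is the given solution of $-\mathrm{div}(w\nabla v)=0$ in $B_{R_0}$ (so $v$ plays exactly the role of $u_0$ in Theorem \ref{08.19.P1}). The hypothesis $0 < r < R_0/2$ guarantees the strict ordering $0 < r/2 < r < 2r < R_0$, which is precisely the admissibility condition $0 < r_1 < r_2 < r_3 < R_0$ required by Theorem \ref{08.19.P1}; the constraint $2r < R_0$ is the reason the range is cut off at $R_0/2$, and the ambient assumption $B_{2R_0}\subseteq\Omega$ supplies enough room for the theorem to apply.

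With the radii chosen, the conclusion $H(r_2)\le H(r_1)^\mu H(r_3)^{1-\mu}$ of Theorem \ref{08.19.P1} becomes the desired inequality once the exponent $\mu$ is made explicit. The only computation is to evaluate
\begin{equation*}
	\mu = \frac{r_1^{-\alpha}-r_2^{-\alpha}}{r_1^{-\alpha}-r_3^{-\alpha}}
\end{equation*}
at $r_1=r/2,\ r_2=r,\ r_3=2r$. Using $(r/2)^{-\alpha}=2^\alpha r^{-\alpha}$ and $(2r)^{-\alpha}=2^{-\alpha} r^{-\alpha}$, the common factor $r^{-\alpha}$ cancels, giving $\mu = (2^\alpha-1)/(2^\alpha-2^{-\alpha})$; multiplying numerator and denominator by $2^\alpha$ yields the stated value $\mu = (4^\alpha-2^\alpha)/(4^\alpha-1)$, manifestly independent of $r$. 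Since $\alpha>0$ forces $2^{-\alpha}<1<2^\alpha$, one checks immediately that $0<\mu<1$, so the right-hand side is a genuine weighted geometric mean.

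I do not anticipate any real obstacle here: the corollary is a pure specialization, and the substance of the argument has already been carried out in Lemma \ref{08.18.L1}, Corollary \ref{08.18.C1}, and Theorem \ref{08.19.P1}. The only points demanding (routine) care are the algebraic simplification of $\mu$ to the closed form $(4^\alpha-2^\alpha)/(4^\alpha-1)$ and the verification that the three concentric balls $B_{r/2}\subset B_r\subset B_{2r}$ all lie inside $B_{R_0}$, which is exactly ensured by $r<R_0/2$.
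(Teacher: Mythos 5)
Your proposal is correct and is exactly the paper's proof: the corollary follows by taking $r_1=\frac{r}{2}$, $r_2=r$, $r_3=2r$ in Theorem \ref{08.19.P1}, and your algebraic simplification of $\mu$ to $\frac{4^\alpha-2^\alpha}{4^\alpha-1}$ is accurate. Nothing further is needed.
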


\begin{proof}
	Taking $r_1= \frac{r}{2}, r_2=r, r_3=2r$ in   Theorem \ref{08.19.P1} produces  the desired conclusion. 
\end{proof}
We have already obtained the three sphere inequality  at the degenerate point $0$. To derive an estimate over the entire domain, we will now present the three sphere inequality  at the non-degenerate point.
\begin{lemma}\label{05.19.L1}
	Let $\Gamma$ be a non-empty open subset of $\partial\Omega$. Let $r_0, r_1, r_2, r_3$ be four real numbers such that $0<r_1<r_0<r_2<r_3< \frac{R_0}{8}$. Suppose that $y_0\in \Omega, |y_0|>r_0$ satisfies the following three conditions: 
	
	{\rm i)} $B(y_0, r)\cap \Omega$ is star-shaped with respect to $y_0$ for all $r\in (0,  \frac{R_0}{4})$, 
	
	{\rm ii)} $B(y_0, r)\subseteq \Omega$ for all $r\in (0, r_0)$,
	
	{\rm iii)} $B(y_0, r)\cap \partial \Omega\subseteq \Gamma$ for all $r\in (r_0,  \frac{R_0}{2})$. 
	
	\noindent If $u\in H^2(\Omega)$ is a solution to  $ \mathrm{div}(w\nabla u)=0$ in $\Omega\setminus B_{2r_0}$ and $u=0$ on $\Gamma$, then there exists $\mu\in (0,1)$ such that 
	\begin{equation*}
		\int_{B(y_0, r_2)\cap D}v^2\mathrm d x\leq C\left(\int_{B(y_0, r_1)}v^2\mathrm d x\right)^\mu \left(\int_{B(y_0, r_3)\cap \Omega}v^2\mathrm d x\right)^{1-\mu}, 
	\end{equation*}
	and $C>0$ is a constant that only depends on $r_0, R_0$ and $N$. 
\end{lemma}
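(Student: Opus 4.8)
The plan is to reduce the statement to a three sphere inequality for a \emph{uniformly elliptic}, smooth-coefficient equation and then to run an Almgren-type frequency function centered at $y_0$, using $u=0$ on $\Gamma$ together with the star-shapedness hypothesis {\rm i)} to control the boundary contributions. First I would record why $y_0$ is a non-degenerate center. Under the standing assumption $B_{2R_0}\subseteq\Omega$, every point of $\partial\Omega$ has norm at least $2R_0$, while the hypotheses {\rm ii)}--{\rm iii)} place $y_0$ within $\tfrac{R_0}{2}$ of $\partial\Omega$; hence $|y_0|\geq\tfrac{3R_0}{2}$ and, since $r_3<\tfrac{R_0}{8}$, we have $|x|>R_0$ throughout $B(y_0,r_3)$. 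Consequently $B(y_0,r_3)\cap B_{2r_0}=\emptyset$, so the equation $\mathrm{div}(w\nabla u)=0$ holds on $B(y_0,r_3)\cap\Omega$, the weight $w=|x|^\alpha$ is smooth there with $R_0^\alpha\leq w\leq m^\alpha$, and $u$ vanishes on the part of $\partial\Omega$ that these balls meet, which lies in $\Gamma$ by {\rm iii)}. Since $w$ is comparable to a positive constant on this region, weighted and unweighted $L^2$ integrals differ only by a factor depending on $R_0$ and $\alpha$; this is why the conclusion carries no weight and why a constant $C$ is admitted.

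Next I would set up the analogue of Lemma~\ref{08.18.L1} centered at $y_0$. Writing
\begin{equation*}
H(r)=\int_{B(y_0,r)\cap\Omega}w\,u^2\,\mathrm d x,\qquad D(r)=\int_{B(y_0,r)\cap\Omega}w\,|\nabla u|^2\big(r^2-|x-y_0|^2\big)\,\mathrm d x,\qquad \Phi(r)=\frac{D(r)}{H(r)},
\end{equation*}
I would differentiate, obtaining $H'(r)=\int_{\partial B(y_0,r)\cap\Omega}w\,u^2\,\mathrm d\sigma$ and, by repeating the Rellich--Pohozaev computations of Steps~1--3 of Lemma~\ref{08.18.L1} with the field $x-y_0$ in place of $x$, the corresponding identities for $D'$ and $\Phi'$. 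The only new feature is that, for $r\in(r_0,r_3)$, each integration by parts over the truncated domain $B(y_0,r)\cap\Omega$ produces an additional surface integral over $\partial\Omega\cap B(y_0,r)$ (for $r<r_0$ these terms are absent by {\rm ii)} and one is exactly in the interior situation).

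The crux---and the step I expect to be the main obstacle---is controlling these $\partial\Omega$ surface integrals. Since $\partial\Omega\cap B(y_0,r)\subseteq\Gamma$ and $u=0$ on $\Gamma$, the tangential gradient of $u$ vanishes there, so $\nabla u=(\partial_\nu u)\nu$ on that surface and every such term collapses to a multiple of
\begin{equation*}
\int_{\partial\Omega\cap B(y_0,r)}w\,\big((x-y_0)\cdot\nu\big)\,\big(r^2-|x-y_0|^2\big)\,|\partial_\nu u|^2\,\mathrm d\sigma .
\end{equation*}
The star-shapedness hypothesis {\rm i)} is precisely the statement that $(x-y_0)\cdot\nu\geq 0$ on $\partial\Omega\cap B(y_0,r)$, so these terms carry a favorable sign and do not spoil the monotonicity. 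Combining this with the elementary bound $|\nabla w|/w=\alpha/|x|\leq\alpha/R_0$, which controls the contribution coming from the non-constancy of $w$ exactly as $\widetilde R_\epsilon\leq\alpha D_\epsilon(r)$ does in Step~4 of Lemma~\ref{08.18.L1}, one obtains a differential inequality $\Phi'(r)\geq-\tfrac{c}{r}\Phi(r)$---equivalently, $r^{c}\Phi(r)$ is nondecreasing---for a constant $c=c(\alpha,N)\geq0$. Making this sign bookkeeping rigorous (in particular verifying that the cross terms produced by $\nabla w$ are genuinely dominated by $D(r)$) is where the real work lies.

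Finally I would convert this monotonicity into the three sphere inequality exactly as in the proof of Theorem~\ref{08.19.P1}. From $\frac{\mathrm d}{\mathrm d r}\log H(r)=\big(N+\alpha+\Phi(r)\big)/r$ together with the monotonicity of $r^{c}\Phi$, integrating over $[r_1,r_2]$ and over $[r_2,r_3]$ yields a logarithmic-convexity inequality and hence
\begin{equation*}
H(r_2)\leq C\,\big(H(r_1)\big)^{\mu}\big(H(r_3)\big)^{1-\mu}
\end{equation*}
for some $\mu\in(0,1)$ determined by $r_1,r_2,r_3$. Because $R_0^\alpha\leq w\leq m^\alpha$ on $B(y_0,r_3)$, each $H(r_i)$ is comparable, up to constants depending only on $R_0$ and $\alpha$, to the unweighted solid integral $\int_{B(y_0,r_i)\cap\Omega}u^2\,\mathrm d x$; absorbing these factors produces the stated estimate with $C=C(r_0,R_0,N)$. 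The innermost term appears over the \emph{full} ball $B(y_0,r_1)$ rather than $B(y_0,r_1)\cap\Omega$ precisely because $r_1<r_0$ forces $B(y_0,r_1)\subseteq\Omega$ by {\rm ii)}.
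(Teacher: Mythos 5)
Your overall strategy coincides with the paper's: the paper's entire proof consists of observing that on $\widehat\Omega=\Omega\setminus\overline B(0,2r_0)$ the operator $\mathrm{div}(|x|^\alpha\nabla\cdot)$ is uniformly elliptic with ellipticity constants controlled by $r_0$ and $\sup_{\Omega}|x|^\alpha$, and then citing the standard boundary three sphere inequalities of Adolfsson--Escauriaza, Kukavica, Phung and Vessella. What you have written out --- the Almgren frequency function centered at $y_0$, the collapse of $\nabla u$ to $(\partial_\nu u)\nu$ on $\Gamma$ because $u$ vanishes there, the favorable sign of $(x-y_0)\cdot\nu$ supplied by the star-shapedness hypothesis, and the absorption of the $\nabla w/w$ terms into $D(r)$ --- is precisely the argument behind those citations, so in substance you are expanding the references the paper points to rather than taking a different route.

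One step of your reduction is, however, not justified as written. You claim that hypotheses {\rm ii)}--{\rm iii)} ``place $y_0$ within $\tfrac{R_0}{2}$ of $\partial\Omega$,'' and from $B_{2R_0}\subseteq\Omega$ you deduce $|y_0|\geq\tfrac{3R_0}{2}$, hence $B(y_0,r_3)\cap B_{2r_0}=\emptyset$ and $w\geq R_0^\alpha$ on $B(y_0,r_3)$. But condition {\rm iii)} is a containment $B(y_0,r)\cap\partial\Omega\subseteq\Gamma$, which is vacuously satisfied when $B(y_0,r)\cap\partial\Omega=\emptyset$; it does not force $y_0$ to be near the boundary (and indeed the lemma is later applied to interior centers $q_j$ for which {\rm iii)} is vacuous). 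The only quantitative lower bound the hypotheses give is $|y_0|>r_0$, which by itself does not keep $B(y_0,r_3)$ (with $r_3>r_0$) away from $B_{2r_0}$, i.e.\ away from the region where the equation is not assumed to hold and where the ellipticity constant would depend on $r_0$ rather than $R_0$. This separation is exactly what your reduction to a uniformly elliptic problem with constants depending only on $R_0$ rests on, so you need either to assume it (as the paper implicitly does by working on $\widehat\Omega$ and taking the ellipticity lower bound $2^\alpha r_0^\alpha$, with the constant then depending on $r_0$ as the lemma's conclusion allows) or to restrict the admissible centers; the inference you give from {\rm ii)}--{\rm iii)} does not supply it.
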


\begin{proof}
	Set $\widehat\Omega=\Omega\setminus\overline B(0,2r_0)$. We note that $\mathcal{A} v=0$ on $\widehat \Omega$ is an uniformly elliptic equation since
	\begin{equation*}
		2^\alpha r_0^\alpha|\xi|^2\leq \sum_{i,j=1}^N|x|^\alpha \xi_i\xi_j\leq \left(\sup_{x\in\Omega}|x|^\alpha \right)|\xi|^2, \ \forall \xi=(\xi_1,\cdots, \xi_N)\in\mathbb{R}^N
	\end{equation*}
	and $\Omega$ is a bounded domain. 
	Similar to \cite{Adolfsson,Kukavica1, Kukavica2,Phung, Vessella}, we obtain Lemma \ref{05.19.L1}. 
\end{proof}		

%
%

In the proof of WUCP, we consider two cases: when the degenerate point lies inside or outside $\omega$ ($0 \in \omega$).
To deal with the later one, we present a result analogous to the Schauder estimate.
Specifically, we estimate the integral over the small ball containing the origin by the integral over an annular region surrounding the ball.
In such a way, we control the integral in the degenerate region by the integral in the non-degenerate region. Similar Schauder estimates in the context of degenerate equations can be found in \cite{Gabriele,MR1475774,MR4207950}.
\begin{theorem}\label{08.19.T2}
	Let $R_0>0$ with $B_{2R_0}\subseteq \Omega$. Let $u$ be a solution to $ \mathrm{div}(w\nabla u)=0$ in $B_{R_0}$. Then there exists $C>0$ that is independent of $r\ (r<R_0)$ and $u$, such that 
	\begin{equation*}
		\begin{split}
			\int_{B_ \frac{r}{2}}u^2w\mathrm d x\leq  \frac{C}{r^2}\int_{B_r\setminus B_{ \frac{3}{4}r}}u^2w\mathrm d x. 
		\end{split}
	\end{equation*}
\end{theorem}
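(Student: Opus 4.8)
The plan is to reduce the stated annular estimate to the elementary monotonicity of the height function $H(r)=\int_{B_r}wu^2\,\mathrm dx$ together with a reverse-doubling inequality, rather than to a direct Caccioppoli-type computation. Writing $u$ for the solution, I would first record that, passing to the limit $\epsilon=1/k\to0$ in identity \eqref{08.18.4} of Lemma \ref{08.18.L1} and using $R_k\to0$ together with the convergences of Corollary \ref{08.18.C1} (exactly as in the proof of Theorem \ref{08.19.P1}), the solution of $\mathrm{div}(w\nabla u)=0$ in $B_{R_0}$ satisfies
\begin{equation*}
H'(r)=\frac{N+\alpha}{r}H(r)+\frac1r D(r),\qquad D(r)=\int_{B_r}w|\nabla u|^2(r^2-|y|^2)\,\mathrm dy\ge0,
\end{equation*}
for $0<r<R_0$. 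Consequently $\frac{\mathrm d}{\mathrm dr}\log\big(H(r)/r^{N+\alpha}\big)=D(r)/(rH(r))\ge0$, so that $r\mapsto H(r)/r^{N+\alpha}$ is nondecreasing on $(0,R_0)$ (the case $u\equiv0$, i.e.\ $H\equiv0$, being trivial).

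Second, I would turn this monotonicity into a reverse-doubling estimate. The nondecreasing property gives $H(\tfrac34 r)\le(\tfrac34)^{N+\alpha}H(r)$ and $H(\tfrac r2)\le(\tfrac12)^{N+\alpha}H(r)$. Since the annulus integral equals $H(r)-H(\tfrac34 r)$, the first bound yields
\begin{equation*}
\int_{B_r\setminus B_{3r/4}}wu^2\,\mathrm dx=H(r)-H(\tfrac34 r)\ge\big(1-(\tfrac34)^{N+\alpha}\big)H(r),
\end{equation*}
so that $H(r)$, and hence $H(r/2)$, is controlled by the annulus:
\begin{equation*}
\int_{B_{r/2}}u^2w\,\mathrm dx=H(\tfrac r2)\le(\tfrac12)^{N+\alpha}H(r)\le\frac{(\tfrac12)^{N+\alpha}}{1-(\tfrac34)^{N+\alpha}}\int_{B_r\setminus B_{3r/4}}u^2w\,\mathrm dx=:C_0\int_{B_r\setminus B_{3r/4}}u^2w\,\mathrm dx.
\end{equation*}

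Third, since $C_0$ depends only on $N$ and $\alpha$, and $r<R_0$, I would absorb the dimensional factor: because $r<R_0$ we have $C_0\le C_0R_0^2/r^2$, so the claim follows with $C=C_0R_0^2$, which is independent of $r$ and $u$. The factor $1/r^2$ is thus merely a convenient, slightly weaker normalization of the scale-invariant bound just obtained; one could alternatively read the whole inequality off the scaling invariance $u(\cdot)\mapsto u(r\,\cdot)$ of the operator $\mathrm{div}(|x|^\alpha\nabla\,\cdot)$, under which $H$ transforms homogeneously of degree $N+\alpha$.

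The main obstacle is the first step: establishing the Hadamard-type identity $H'=\frac{N+\alpha}{r}H+\frac1r D$ for the genuinely degenerate solution, where $u$ need not lie in $H^2$ near the origin and the naive integration by parts producing $\mathrm{div}(|x|^{\alpha-2}x)$ is singular at $0$. I would handle this precisely as in Theorem \ref{08.19.P1}: carry out the identity for the regularized problems, where $w_k\in C^{0,1}$ forces $u_k\in H^2$ and makes every integration by parts legitimate, and then pass to the limit, the only genuinely degenerate contribution being the remainder $R_k$, which vanishes by \eqref{co2}. Everything after Step 1 is elementary monotonicity and dimension counting.
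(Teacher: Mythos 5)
Your argument is correct, but it is genuinely different from the paper's. The paper proves Theorem \ref{08.19.T2} by a purely variational Caccioppoli argument: it takes a cut-off $\zeta$ equal to $1$ on $B_{3r/4}$ and supported in $B_r$, tests the equation with $\zeta^2u$ to get $\int\zeta^2|\nabla u|^2w\,\mathrm dx\le 8\int u^2|\nabla\zeta|^2w\,\mathrm dx$, and then applies the weighted Hardy inequality of Lemma \ref{08.15.L1} to $\zeta u\in H^1_{w,0}$; the factor $C/r^2$ is exactly $|\nabla\zeta|^2$. This is elementary, local, and needs none of the approximation machinery. You instead extract the monotonicity of $r\mapsto H(r)/r^{N+\alpha}$ from the frequency-function computation of Lemma \ref{08.18.L1} and convert it into a reverse-doubling bound $H(r)-H(\tfrac34r)\ge\bigl(1-(\tfrac34)^{N+\alpha}\bigr)H(r)$, which yields the claim with a scale-invariant constant (strictly sharper than the stated $C/r^2$). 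Two caveats: (i) you should not differentiate the limit function $H$ directly --- the clean route is to keep the \emph{integrated} inequality $\log\bigl(H_\epsilon(r_2)/H_\epsilon(r_1)\bigr)+\tfrac12\int_{r_1}^{r_2}R_\epsilon/(rH_\epsilon)\,\mathrm dr\ge(N+\alpha)\log(r_2/r_1)$ obtained by dropping $\Phi_\epsilon\ge0$ in \eqref{08.18.5}, and pass to the limit via \eqref{co1}--\eqref{co2} exactly as in Theorem \ref{08.19.P1} (also disposing of the degenerate cases $H(\tfrac34r)=0$ or $H(\tfrac r2)=0$, which are trivial); you identify this obstacle and the right remedy, so it is not a gap; (ii) the closing remark that the inequality can be ``read off'' scaling invariance overclaims --- scaling reduces to $r=1$ but does not by itself produce a constant uniform over all solutions. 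Net comparison: the paper's proof is shorter and self-contained; yours costs the Section \ref{S3}--\ref{S4} apparatus but exposes the reverse-doubling structure and gives the sharper, $r$-independent constant.
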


\begin{proof}
	Let $\zeta\in C_0^\infty(\mathbb{R}^N)$ be a cut-off function satisfying 
	\begin{equation*}
		\zeta=1 \mbox{ on } B_{ \frac{3}{4}r}, \mbox{ and } \zeta=0 \mbox{ on } \mathbb{R}^N-B_r, \mbox{ and } |\nabla \zeta|\leq  \frac{C}{r} \mbox{ on }B_r\setminus B_{ \frac{3}{4}r}, 
	\end{equation*}
	where $C>0$ is a generic constant. 
	
	Using $\zeta^2u$ as the test function, we have 
	\begin{equation*}
		0=\int_{B_{R_0}}\nabla u\cdot \nabla(\zeta^2u)w\mathrm d x. 
	\end{equation*}
	This implies that 
	\begin{equation*}
		\begin{split}
			&\hspace{4.5mm}\int_{B_{R_0}}\zeta^2|\nabla u|^2w\mathrm d x=-2\int_{B_{R_0}}\zeta u(\nabla\zeta \cdot\nabla u) w\mathrm d x\\
			&\leq  \frac{1}{2}\int_{B_{R_0}}\zeta^2|\nabla u|^2w\mathrm d x+4\int_{B_{R_0}}u^2|\nabla \zeta|^2w\mathrm d x
		\end{split}
	\end{equation*}
	by Cauchy inequality, i.e., 
	\begin{equation*}
		\int_{B_{R_0}}\zeta^2|\nabla u|^2w\mathrm d x\leq 8\int_{B_{R_0}}u^2|\nabla \zeta|^2w\mathrm d x. 
	\end{equation*}
	From which, we obtain that 
	\begin{equation*}
		\begin{split}
			\int_{B_{R_0}}|\nabla (\zeta u)|^2w\mathrm d x
			&\leq 2\int_{B_{R_0}}|\nabla \zeta|^2u^2w\mathrm d x+2\int_{B_{R_0}}\zeta^2|\nabla u|^2w\mathrm d x\\
			&\leq 18\int_{B_{R_0}}u^2|\nabla \zeta|^2w\mathrm d x. 
		\end{split}
	\end{equation*}
	Note that $\zeta u\in H_{w,0}^1(\Omega)$, by \eqref{hardy62} in Lemma \ref{08.15.L1}, we obtain 
	\begin{equation*}
		\int_{B_{R_0}}(\zeta u)^2w\mathrm d x\leq C\int_{B_{R_0}}u^2|\nabla \zeta|^2w\mathrm d x. 
	\end{equation*}
	This implies that 
	\begin{equation*}
		\int_{B_ \frac{r}{2}}u^2w\mathrm d x\leq  \frac{C}{r^2}\int_{B_r\setminus B_{ \frac{3}{4}r}}u^2w\mathrm d x
	\end{equation*}
	according to the definition of $\zeta$. 
\end{proof}

Next, we  provide the proof of WUCP, which is characterized by  the following two equivalent theorems.
\begin{theorem}\label{3-2.C8}
Let $\Gamma$ be a non-empty open subset of $\partial\Omega$  and let $\omega$ be a non-empty open subset of $\Omega$. Then, for each $D\subseteq \Omega$ satisfying  $\partial D\cap \partial \Omega\subset\subset \Gamma$ and $\overline D\setminus(\Gamma\cap \partial D)\subseteq \Omega$,  there exists $\mu\in (0,1)$, such that for any solution $v\in H_{w}^1(\Omega)$ of \eqref{08.15.1} with $v=0$ on $\Gamma$, we have 
\begin{equation}\label{05.19.4}
	\int_{D} v^2w\mathrm d y\leq C\left( \frac{1}{\epsilon}\right)^{ \frac{1-\mu}{\mu}}\int_\omega v^2w\mathrm d y+\epsilon\int_\Omega v^2w\mathrm d y
\end{equation}
for any $\epsilon>0$, where $C>0$ is a constant   independent of $u$.  
\end{theorem}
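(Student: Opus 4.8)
The plan is to reduce \eqref{05.19.4} to a single three-factor interpolation inequality
\[
	\int_D v^2 w\,\mathrm d y \le C\left(\int_\omega v^2 w\,\mathrm d y\right)^{\mu}\left(\int_\Omega v^2 w\,\mathrm d y\right)^{1-\mu},
\]
and then to split it by Young's inequality with conjugate exponents $1/\mu$ and $1/(1-\mu)$: for every $\epsilon>0$ one has $a^\mu b^{1-\mu}\le C\epsilon^{-(1-\mu)/\mu}a+\epsilon b$, which with $a=\int_\omega v^2 w$ and $b=\int_\Omega v^2 w$ gives exactly \eqref{05.19.4}. Thus the whole task is to propagate the smallness measured on $\omega$ to all of $D$.

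To establish the interpolation inequality I would run a propagation-of-smallness (chain-of-balls) argument. The hypotheses on $D$ are tailored for this: $\overline D\setminus(\Gamma\cap\partial D)$ is a compact subset of $\Omega$ and $\partial D\cap\partial\Omega\subset\subset\Gamma$, so $\overline D$ can be covered by finitely many balls, each either an interior ball contained in $\Omega$ or a boundary ball centered at a point of $\Gamma$. Fixing a reference ball inside $\omega$, I connect every covering ball to it by a finite chain of overlapping balls in which the inner ball of each step lies in the middle ball of the previous step. Away from the origin the operator is uniformly elliptic, so along interior chains the classical three sphere inequality applies, and along chains that reach $\partial\Omega$ the boundary three sphere inequality of Lemma \ref{05.19.L1} (which already encodes $v=0$ on $\Gamma$) applies. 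A crucial simplification is that on any region bounded away from the origin the weight $w=|x|^\alpha$ is comparable to a constant, so weighted and unweighted $L^2$ integrals are equivalent there; this lets me pass freely between the unweighted integrals of Lemma \ref{05.19.L1} and the weighted ones elsewhere. Composing the finitely many steps yields the interpolation bound over the part of $D$ separated from the degenerate point, with an exponent $\mu$ that shrinks at each composition but stays in $(0,1)$.

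The degenerate point $0$ is handled separately, and this is where the argument bifurcates. If $0\in\omega$, a ball $B_\rho\subseteq\omega$ already carries the smallness, and I propagate outward through balls centered at the origin using the degenerate three sphere inequality of Corollary \ref{08.19.C1}, controlling $\int_{B_R}v^2 w$ by $\int_\omega v^2 w$ for the radius $R$ needed to meet the non-degenerate chain. If $0\notin\omega$, the smallness cannot reach the origin through uniformly elliptic balls alone; instead I first propagate it to a non-degenerate annulus $B_r\setminus B_{3r/4}$ by the non-degenerate chain, and then invoke the Schauder-type estimate of Theorem \ref{08.19.T2}, namely $\int_{B_{r/2}}v^2 w\le \frac{C}{r^2}\int_{B_r\setminus B_{3r/4}}v^2 w$, to carry the control into the degenerate ball $B_{r/2}$. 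In either case, combining the degenerate contribution with the non-degenerate chain estimates and summing over the finite cover produces the interpolation inequality over all of $D$.

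The step I expect to be the main obstacle is the coherent coupling of the two genuinely different three sphere inequalities—the degenerate one centered at the origin (Corollary \ref{08.19.C1}, Theorem \ref{08.19.P1}) and the non-degenerate one (Lemma \ref{05.19.L1})—into one chain with a common exponent $\mu\in(0,1)$, together with the geometric bookkeeping that guarantees the star-shapedness and inclusion hypotheses of Lemma \ref{05.19.L1} and the nesting of the chain hold uniformly over the cover. The case $0\notin\omega$ is the delicate one: one must check that the annulus $B_r\setminus B_{3r/4}$ can actually be linked to $\omega$ through uniformly elliptic balls, and that the blow-up factor $r^{-2}$ coming from Theorem \ref{08.19.T2} is absorbed harmlessly once the Young splitting is performed.
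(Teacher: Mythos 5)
Your proposal matches the paper's proof in all essentials: the same two-case split at the degenerate point ($0\in\omega$ handled by the degenerate three sphere inequality of Corollary \ref{08.19.C1}, $0\notin\omega$ handled by the annulus estimate of Theorem \ref{08.19.T2} followed by a non-degenerate chain), the same chain-of-balls propagation via Lemma \ref{05.19.L1} using that $w$ is comparable to a constant away from the origin, and the same intermediate interpolation inequality $\int_D v^2w\le C(\int_\omega v^2w)^\mu(\int_\Omega v^2w)^{1-\mu}$. The only cosmetic difference is the final step, where you split $a^\mu b^{1-\mu}$ by Young's inequality while the paper uses the equivalent dichotomy on whether $\int_\Omega v^2w\big/\int_D v^2w$ exceeds $1/\epsilon$.
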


\begin{proof} 
We divide the proof into the following steps.

\noindent {\it Step 1}. 
There are two cases that we should consider: one is $0\in \omega$, the other is $0\notin \omega$.  
In what follows, we denote $k\in\mathbb{N}$ an arbitrary integer. 

{\it Case 1}. Assume $0\in \omega$.  
We choose $r_0>0$ such that $r_0$ satisfies the conditions of Lemma \ref{05.19.L1} and $B(0,r_0)\subseteq \omega$. Since $\Omega$ is connected, then there exists a compact set $K\subseteq D$, such that $B(q,r_0)\subseteq D$ for all $q\in K$, and $D\subseteq \bigcup_{q\in K}B(q,2r_0)$ and $B(q,2r_0)\cap \partial \Omega \subseteq \Gamma$ for all $q\in K$. Hence, 
there exists a sequence of balls $\{B(q_j,r_0)\}_{j=0,1,\cdots, k}$, such that the following conditions hold
\begin{equation}\label{subset2}
	\begin{split}
		B(q_{j+1},r_0)\subseteq B(q_j,2r_0) \mbox{ for all } j=0,1,\cdots, k-1,  \quad \mbox{and } q_0=0, q_k=q.
	\end{split}
\end{equation}
Note that $w\geq r_0^\alpha$ on $\Omega\setminus B_{r_0}$, then there exists $C>0$ that independents $v$, such that 
\begin{equation*}
	\begin{split}
		&\hspace{4.5mm}\int_{B(q_k,r_0)}v^2w\mathrm d y\ \left(\mbox{or},  \int_{B(q_k,2r_0)\cap D}v^2w\mathrm d y\right)\\ 
		&\leq C\left(\int_{B(q_k,r_0)}v^2w\mathrm d y\right)^{\mu_1}\left(\int_\Omega v^2w\mathrm d y\right)^{1-\mu_1}\\
		&\leq C\left(\int_{B(q_{k-1},2r_0)}v^2w\mathrm d y\right)^{\mu_1}\left(\int_\Omega v^2w\mathrm d y\right)^{1-\mu_1},		
	\end{split}
\end{equation*}
where $\mu_1$ is the exponent in Lemma \ref{05.19.L1} and we used Lemma \ref{05.19.L1} in the first inequality and \eqref{subset2} in the second. Then, we apply Lemma \ref{05.19.L1} again to  $\int_{B(q_{k-1},2r_0)}v^2w\mathrm d y$, which implies
\begin{equation*}
	\begin{split}
		\int_{B(q_k,r_0)}v^2w\mathrm d y
		&\leq C\left(\int_{B(q_{k-1},2r_0)}v^2w\mathrm d y\right)^{\mu_1}\left(\int_\Omega v^2w\mathrm d y\right)^{1-\mu_1}\\ 
		&\leq C\left(\int_{B(q_{k-1}, r_0)}v^2\mathrm d y\right)^{\mu_1^2}\left(\int_{\Omega}v^2w\mathrm d y\right)^{1-\mu_1^2}\\
		&\leq C\left(\int_{B(q_{k-2}, 2r_0)}v^2\mathrm d y\right)^{\mu_1^2}\left(\int_{ \Omega}v^2w\mathrm d y\right)^{1-\mu_1^2},
	\end{split}
\end{equation*}

Repeating the use of Lemma \ref{05.19.L1} and \eqref{subset2} in this way, we can deduce that
\begin{equation*}
	\begin{split}
		\int_{B(q_k,r_0)}v^2w\mathrm d y
		&\leq C\left(\int_{B(q_{k-2}, 2r_0)}v^2\mathrm d y\right)^{\mu_1^2}\left(\int_{ \Omega}v^2w\mathrm d y\right)^{1-\mu_1^2}\\
		&\leq \cdots\\ 
		&\leq C\left(\int_{B(q_1,r_0)}v^2w\mathrm d y\right)^{\mu_1^{k}}\left(\int_\Omega v^2w\mathrm d y\right)^{1-\mu_1^{k}}\\
		&\leq C\left(\int_{B(q_0,2r_0)}v^2w\mathrm d y\right)^{\mu_1^k}\left(\int_\Omega v^2w\mathrm d y\right)^{1-\mu_1^k}.\\ 
		\end{split}
\end{equation*}
Since $q_0=0$, by applying Corollary \ref{08.19.C1} to  $\int_{B(q_0,2r_0)}v^2w\mathrm d y$, we  conclude that
\begin{equation*}
	\begin{split}
		\int_{B(q_k,r_0)}v^2w\mathrm d y
		&\leq C\left(\int_{B(q_0,2r_0)}v^2w\mathrm d y\right)^{\mu_1^k}\left(\int_\Omega v^2w\mathrm d y\right)^{1-\mu_1^k}.\\ 
				&\leq C\left(\int_{B(0,r_0)}v^2w\mathrm d y\right)^{\mu_1^k\mu_2}\left(\int_\Omega v^2w\mathrm d y\right)^{1-\mu_1^k\mu_2}\,,  
	\end{split}
\end{equation*}

where  $\mu_2$ is the exponent in Corollary \ref{08.19.C1}. 

{\it Case 2}. Assume $0\notin \overline\omega$. 
We choose $r_0>0$ and $q_0\in \omega$ such that $r_0$ satisfies the conditions of Lemma \ref{05.19.L1} and $B(q_0,2r_0)\subseteq \omega$ and $B(q_0,r_0)\cap B(0,r_0)=\emptyset$. Note that in contrast to the choice made in Case 1, we now take $q_0 \neq 0$. Choosing $q_1,\cdots, q_k$ and let $q_k =0$ such that $|q_{j}-q_{j-1}|<r_0$ for $j=1,\cdots, k$,  and $B(q_j, 2r_0)\subseteq \Omega$ for $j=0,1,\cdots, k$. 
Then, for $q_k=0$, there exists $C>0$ that is  independent of $v$, such that 
\begin{equation}\label{zerok}
	\begin{split}
		\int_{B(q_k,2r_0)}v^2w\mathrm d y\leq  \frac{C}{r_0^{2}}\int_{A_{3r_0,  \frac{5}{2}r_0}}v^2w\mathrm d y
	\end{split}
\end{equation}
by Theorem \ref{08.19.T2} and $r_0< \frac{R_0}{4}$, where $A_{3r_0,  \frac{5}{2}r_0}=\{x\in\mathbb{R}^N\colon  \frac{5}{2}r_0<|x|<{3r_0}\}$ is an annulus. Note that 
\begin{equation*}
	f(q)=\int_{B(q, r_0)\cap A_{3r_0, \frac{5}{2}r_0}}v^2w\mathrm d y, \ \forall q\in \partial B\left(0,  \frac{5}{2}r_0\right)
\end{equation*}
is a continuous function, then there exists $q_{k-1}\in \partial B(0, \frac{5}{2}r_0)$ such that  
\begin{equation*}
	f(q_{k-1})=\max_{q\in \partial B(0, \frac{5}{2}r_0)} f(q). 
\end{equation*}
Hence, there exists  a constant $C\in \mathbb{Z}^+$ (the constant $C$ depends only on $N$  and $r_0$), such that $A_{3r_0, \frac{5}{2}r_0}$ can be covered by $C$ numbers $B(q,r_0)$ with $q\in \partial B(0, \frac{5}{2}r_0)$. Moreover, 
\begin{equation}\label{05.19.1}
	\int_{A_{3r_0,  \frac{5}{2}r_0}}v^2w\mathrm d y\leq C\int_{B(q_{k-1},r_0)}v^2w\mathrm d y.
\end{equation} 
Since $\Omega$ is connected, then there exists a compact set $K\subseteq D\setminus B(0,2r_0)$, such that $B(q,r_0)\subseteq D\setminus B(0,r_0)$ for all $q\in K$, and $D\setminus B(0,2r_0)\subseteq \bigcup_{q\in K}B(q,2r_0)$ and $B(q,2r_0)\cap \partial \Omega\subseteq\Gamma$ for all $q\in K$. Hence, there exists a sequence of balls $\{B(q_j,r_0)\}_{j=0,1,\cdots, k-1}$, such that the following conditions hold
\begin{equation*}
	\begin{split}
		B(q_{j+1},r_0)\subseteq B(q_j,2r_0) \mbox{ for all } j=0,1,\cdots, k-2.
	\end{split}
\end{equation*}
Now, we use Lemma \ref{05.19.L1} by $k-2$ times to obtain 
\begin{equation}\label{05.19.2}
	\begin{split}
		\int_{B(q_{k-1},r_0)}v^2w\mathrm d y
		&\leq \int_{B(q_{k-2},2r_0)}v^2w\mathrm d y\\
		&\leq C\left(\int_{B(q_{k-3}, r_0)}v^2w\mathrm d y\right)^{\mu_1}\left(\int_{\Omega}v^2w\mathrm d y\right)^{1-\mu_1}\\
		&\leq \cdots\leq C\left(\int_{B(q_0,r_0)}v^2w\mathrm d y\right)^{\mu_1^{k-2}}\left(\int_\Omega v^2w\mathrm d y\right)^{1-\mu_1^{k-2}}\,, 
	\end{split}
\end{equation}
where $\mu_1$ is the exponent in Lemma \ref{05.19.L1}. Finally, together \eqref{zerok}, \eqref{05.19.1} and \eqref{05.19.2} we have 
\begin{equation}\label{05.19.3}
	\begin{split}
		\int_{B(0,2 r_0)}v^2w\mathrm d y\leq C\left(\int_{B(q_0,r_0)}v^2w\mathrm d y\right)^{\mu}\left(\int_\Omega v^2w\mathrm d y\right)^{1-\mu}, 
	\end{split}
\end{equation}
where the constant $C>0$ is independent of $v$ but depends on $r_0$, and $\mu=\mu_1^{k-2}$. 

For  each point $q_m\in K\subseteq D\setminus B(0,2r_0)$, using  Lemma \ref{05.19.L1} by $m$ times, we obtain 
\begin{equation*}
	\begin{split}
	&\hspace{4.5mm}\int_{B(q_m,r_0)}v^2w\mathrm d y \left(\mbox{or},  \int_{B(q_m,2r_0)\cap D}v^2w\mathrm d y\right)\\
	 &\leq C\left(\int_{B(q_0,r_0)} v^2w\mathrm d y\right)^{\mu_1^{m}}\left(\int_\Omega v^2\mathrm d y\right)^{1-\mu_1^{m}},
	 	\end{split}
\end{equation*}
where $\mu_1$ is defined in Lemma \ref{05.19.L1}.

\noindent {\it Step 2}. By Case 1 and Case 2 in Step 1, using the case that $K$ is compact and by finite covering theorem, we obtain that 
\begin{equation}\label{wucp1}
	\int_{D} v^2w\mathrm d y\leq C\left(\int_\omega v^2w\mathrm d y\right)^\mu\left(\int_\Omega v^2w\mathrm d y\right)^{1-\mu}, 
\end{equation}
where $C>0$ is a constant that is independent of $v$. 

\noindent {\it Step 3}. The proof of \eqref{05.19.4} is standard. We denote 
\begin{equation*}
	A=\int_{D} v^2w\mathrm d y\neq 0, \quad B=\int_\omega v^2w\mathrm d y, \quad E=\int_\Omega v^2w\mathrm d y. 
\end{equation*}
Then, by Step 2, there exists $C>0$ and $\mu\in (0,1)$ such that $A\leq CB^\mu E^{1-\mu}$,    i.e., 
\begin{equation*}
	A\leq C^ \frac{1}{\mu}B\left( \frac{E}{A}\right)^ \frac{1-\mu}{\mu}. 
\end{equation*}
Now, if $ \frac{E}{A}\leq  \frac{1}{\epsilon}$, then $A\leq \epsilon E$. This implies \eqref{05.19.4}. This complete the proof of Theorem \ref{3-2.C8}. 
\end{proof}
Lastly, we present an equivalent result to Theorem \ref{3-2.C8}.
\begin{theorem}\label{3-2.T7}
Let $\Gamma$ be a non-empty open subset of $\partial\Omega$  and let $\omega$ be a non-empty open subset of $\Omega$. Then, for each $D\subseteq \Omega$ satisfying $\partial D\cap \partial \Omega\subset\subset \Gamma$ and $\overline D\setminus(\Gamma\cap \partial D)\subseteq \Omega$,  there exists  $\mu\in (0,1)$, such that for any solution $u\in H_{w}^1(\Omega)$ of \eqref{08.15.1} with $u=0$ on $\Gamma$, we have 
\begin{equation}\label{wucp2}
	\int_{D} u^2w\mathrm d y\leq C\left(\int_\omega u^2w\mathrm d y\right)^\mu\left(\int_\Omega u^2w\mathrm d y\right)^{1-\mu},
\end{equation}
where $C>0$ is a constant  independent of $u$. 
\end{theorem}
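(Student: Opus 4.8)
The plan is to recognize that the interpolation inequality \eqref{wucp2} claimed here is literally the estimate \eqref{wucp1} already produced in Step 2 of the proof of Theorem \ref{3-2.C8} (with $u$ in place of $v$), so that the substantive analytic content is already in hand. That content is the iteration, over a finite covering of the compact set $K$, of the local three sphere inequality (Lemma \ref{05.19.L1}) together with the three sphere inequality at the degenerate point (Corollary \ref{08.19.C1}) in the case $0\in\omega$, and the Schauder-type annular estimate (Theorem \ref{08.19.T2}) in the case $0\notin\overline\omega$. Consequently the only genuinely new task is to justify the word \emph{equivalent}: namely, that \eqref{wucp2} and the conclusion \eqref{05.19.4} of Theorem \ref{3-2.C8} imply one another.

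To set up the equivalence I would abbreviate $A=\int_D u^2 w\,\mathrm dy$, $B=\int_\omega u^2 w\,\mathrm dy$, $E=\int_\Omega u^2 w\,\mathrm dy$, reducing the two statements to $A\le C B^\mu E^{1-\mu}$ versus $A\le C\epsilon^{-(1-\mu)/\mu}B+\epsilon E$ for all $\epsilon>0$. The direction \eqref{wucp2}$\Rightarrow$\eqref{05.19.4} is exactly the argument of Step 3: I would apply Young's inequality with conjugate exponents $1/\mu$ and $1/(1-\mu)$ to $B^\mu E^{1-\mu}=(\delta^{-(1-\mu)}B)^\mu(\delta^{\mu}E)^{1-\mu}$ and then relabel $\epsilon\sim\delta^\mu$. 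For the converse \eqref{05.19.4}$\Rightarrow$\eqref{wucp2}, I would minimize $g(\epsilon)=C\epsilon^{-(1-\mu)/\mu}B+\epsilon E$ over $\epsilon>0$; since $-(1-\mu)/\mu-1=-1/\mu$, the stationary point is $\epsilon_\ast\sim(B/E)^\mu$, and both terms of $g(\epsilon_\ast)$ then scale like $B^\mu E^{1-\mu}$, giving $A\le C'B^\mu E^{1-\mu}$ with $C'$ depending only on $C$ and $\mu$. The trivial cases $A=0$ or $E=0$ (the latter forcing $u\equiv0$) are handled separately.

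I do not expect any real obstacle, since the delicate covering and chaining were completed in the proof of Theorem \ref{3-2.C8}; the equivalence itself is a two-line application of Young's inequality. The only point deserving care is the bookkeeping of the exponent $\mu$: across the iteration it appears as a product $\mu_1^{k}\mu_2$ (Case 1) or $\mu_1^{k-2}$ (Case 2) of the individual three sphere exponents, and one must confirm it stays strictly in $(0,1)$. This is automatic because the covering of the compact set $K$ uses finitely many balls, so only finitely many factors occur and $\mu$ remains positive and below one.
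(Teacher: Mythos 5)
Your proposal is correct and follows essentially the same route as the paper: the substantive inequality is already \eqref{wucp1} from Step~2 of Theorem~\ref{3-2.C8}, and the theorem reduces to the elementary equivalence between $A\le CB^{\mu}E^{1-\mu}$ and the $\epsilon$-form \eqref{05.19.4}, which the paper handles by the dichotomy on $E/A$ in one direction and the choice $\epsilon=\tfrac{1}{2}A/E$ in the other. Your Young's-inequality/minimization phrasing of that equivalence is a cosmetic variant of the same computation.
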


\begin{proof}
Assume \eqref{wucp2} is true, we just need to follow the {\it Step 3} in Theorem \ref{3-2.C8} to derive \eqref{05.19.4}.

Conversely, assume \eqref{05.19.4} is true, we denote
\begin{equation*}
	A=\int_{D} v^2w\mathrm d y\neq 0, \quad B=\int_\omega v^2w\mathrm d y, \quad E=\int_\Omega v^2w\mathrm d y, 
\end{equation*}
choose $\epsilon= \frac{1}{2}  \frac{A}{E}$, then $A\leq 2C B^\mu E^{1-\mu}$, and we obtain \eqref{wucp2}.
\end{proof}
Finally, we provide  WUCP for the degenerate  elliptic operator.

\begin{theorem}\label{05.21.C1}
The degenerate elliptic operator $-\mathrm{div}(w\nabla\cdot)$ on $\Omega$ satisfies the WUCP. 
\end{theorem}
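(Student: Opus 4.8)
The plan is to obtain the qualitative WUCP from the propagation of smallness encoded in the three sphere inequalities. First I would note that if $u$ solves $-\mathrm{div}(w\nabla u)=0$ in $\Omega$ and $u\equiv 0$ on a non-empty open set $\omega\subseteq\Omega$, then $\int_\omega u^2w\,\mathrm dy=0$; consequently, in each of the chained estimates behind Theorem \ref{3-2.C8}, the controlling factor $\int_\omega u^2w\,\mathrm dy$ vanishes and the bound \eqref{wucp2} collapses to $\int_D u^2w\,\mathrm dy=0$. Since $w>0$ almost everywhere, this gives $u\equiv 0$ on $D$. The remaining task is to arrange that such interior continuation reaches every point of $\Omega$ without recourse to a boundary portion $\Gamma$, since the WUCP carries no boundary hypothesis.

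For that reason I would not invoke the boundary estimate as a black box, but instead isolate the purely interior chain-of-balls propagation from \emph{Step 1} of the proof of Theorem \ref{3-2.C8}. Concretely, set
\[ Z=\{x\in\Omega : u=0 \text{ a.e. in a neighborhood of } x\}. \]
Then $Z$ is open and contains $\omega$, so it is non-empty; by connectedness of $\Omega$ it suffices to show that $Z$ is closed in $\Omega$, whence $Z=\Omega$ and $u\equiv 0$.

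To prove closedness, I would take $x_0\in\overline Z\cap\Omega$, together with a nearby $x_1\in Z$ and a ball $B(x_1,\rho)$ on which $u$ vanishes. If $x_0\neq 0$, I would center three concentric balls at $x_0$, small enough that the largest stays compactly inside $\Omega\setminus\{0\}$ and the smallest lies inside $B(x_1,\rho)$; on the smallest ball $u=0$, so the interior three sphere inequality for the uniformly elliptic operator $-\mathrm{div}(w\nabla\cdot)$ (available on any set compactly contained in $\Omega\setminus\{0\}$, as in Lemma \ref{05.19.L1} with the boundary data absent) propagates the vanishing to the middle ball, producing a neighborhood of $x_0$ on which $u=0$, i.e.\ $x_0\in Z$. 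When $x_0=0$, or when the admissible concentric balls about $x_0$ unavoidably enclose the origin, I would instead use the degenerate three sphere inequality Corollary \ref{08.19.C1}, anchored at the origin, together with the Schauder-type annular estimate Theorem \ref{08.19.T2}.

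The main obstacle is precisely the degenerate point $0$. The uniformly elliptic three sphere inequality is unavailable on balls meeting $0$, while Corollary \ref{08.19.C1} is centred only at the origin, so propagating the vanishing set \emph{across} $0$ needs an extra device; this is exactly the role of Theorem \ref{08.19.T2}, which bounds $\int_{B_{r/2}}u^2w\,\mathrm dy$ (enclosing the degenerate point) by $\tfrac{C}{r^2}\int_{B_r\setminus B_{3r/4}}u^2w\,\mathrm dy$ (in the non-degenerate zone), so that once the annulus carries no mass the ball around the origin does too. Combining this crossing estimate with the two three sphere inequalities, exactly as in \emph{Case 2} of Theorem \ref{3-2.C8}, closes $Z$ at and near $0$ and finishes the argument.
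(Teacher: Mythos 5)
Your argument is correct and rests on exactly the same three pillars as the paper's (the degenerate three sphere inequality of Corollary \ref{08.19.C1} at the origin, the uniformly elliptic three sphere inequality away from the origin, and the annular estimate of Theorem \ref{08.19.T2} to cross the degenerate point, with the same case split on whether $0$ lies in $\omega$), but it is packaged differently. The paper derives the WUCP in one line by applying the quantitative interpolation inequality \eqref{wucp2} of Theorem \ref{3-2.T7} with $\int_\omega u^2 w\,\mathrm dy=0$; you instead run a direct qualitative open--closed connectedness argument on the set $Z$ where $u$ vanishes locally. Your reorganization buys something real: Theorems \ref{3-2.C8} and \ref{3-2.T7} are stated for solutions of \eqref{08.15.1} satisfying $v=0$ on a boundary portion $\Gamma$, whereas the WUCP as defined in the introduction carries no boundary hypothesis, so the paper's one-line deduction silently imports an assumption that your interior chain-of-balls argument dispenses with. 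What you give up is brevity, and you must justify the purely interior three sphere inequality on balls compactly contained in $\Omega\setminus\{0\}$, which Lemma \ref{05.19.L1} does not literally state (it is formulated near $\Gamma$) but which is classical for uniformly elliptic operators with Lipschitz coefficients and is exactly what the paper itself invokes via \cite{Adolfsson,Kukavica1,Phung}. One small point to tighten: in the closedness step you cannot in general place the smallest of three balls \emph{centered at $x_0$} inside $B(x_1,\rho)$, since $\rho$ may shrink as $x_1\to x_0$; the standard fix is to center the three concentric balls at $x_1$ instead, choosing $r_1<\rho<r_2<r_3$ with $|x_0-x_1|<r_2$ and $r_3<\operatorname{dist}(x_1,\{0\}\cup\partial\Omega)$, so the middle ball contains $x_0$. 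With that adjustment the proof is complete; note also that once $u\equiv 0$ on $\Omega\setminus\{0\}$ the final appeal to Theorem \ref{08.19.T2} is not strictly needed, since $\{0\}$ is a null set.
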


\begin{proof}
By Theorem \ref{3-2.C8} and Theorem \ref{3-2.T7}, we can easily deduce $u \equiv 0$ in $\Omega$ from the assumption that $u = 0$ in $\omega$, which is precisely the WUCP.
\end{proof}

\vspace{3mm}

\noindent{\bf{Acknowledgment}}

\vspace{2mm}

We would like to express our gratitude to Dr. Yubiao Zhang from Tianjin University for providing valuable suggestions for this work.

\vspace{3mm}

\noindent{\bf{Declarations}}
The authors have not disclosed any competing interests and data availability.

\vspace{2mm}


\bibliographystyle{amsplain}
\bibliography{ref20250423}
%
%
%

\end{document}